\def\A{\mathcal{A}}
\def\B{\mathcal{B}}
\def\M{\mathcal {M}}
\def\N{\mathbb {N}}
\def\H{\mathcal {H}}
\def\D{\mathbb D}
\def\F{\mathscr{F}}
\newcommand{\T}{\mathbb{T}}
\theoremstyle{plain}
\newtheorem{theorem}{Theorem}[section]
\newtheorem*{theorem*}{Theorem}
\newtheorem{proposition}[theorem]{\bf Proposition}
\newtheorem{lemma}[theorem]{Lemma}
\theoremstyle{definition}
\theoremstyle{remark}
\newtheorem{remark}[theorem]{\bf Remark}
\newtheorem{example}[theorem]{\bf Example}
\begin{document}
		
		\title[Homomorphisms between polydisk algebras]{Homomorphisms between algebras of holomorphic functions on the infinite polydisk}

		\author[V. Dimant]{Ver\'onica Dimant}
		
		\author[J. Singer]{Joaqu\'{\i}n Singer}

\thanks{Partially supported by Conicet PIP 11220130100483  and ANPCyT PICT 2015-2299 }

\subjclass[2010]{46J15, 46E50,  32A38, 30H05}
\keywords{spectrum, algebras of holomorphic functions, homomorphisms of algebras}

\address{Departamento de Matem\'{a}tica y Ciencias, Universidad de San
	Andr\'{e}s, Vito Dumas 284, (B1644BID) Victoria, Buenos Aires,
	Argentina and CONICET} \email{vero@udesa.edu.ar}

\address{Departamento de Matem\'{a}tica, Facultad de Ciencias Exactas y Naturales, Universidad de Buenos Aires, (1428) Buenos Aires,
	Argentina and IMAS-CONICET} \email{jsinger@dm.uba.ar}
		
\begin{abstract}
We study the vector-valued spectrum $\mathcal{M}_\infty(B_{c_0},B_{c_0})$, that is, the set of non null algebra homomorphisms from $\H^\infty(B_{c_0})$ to $\H^\infty(B_{c_0})$ which is naturally projected onto the closed unit ball of $\H^\infty(B_{c_0}, \ell_\infty)$, likewise the scalar-valued spectrum $\M_\infty(B_{c_0})$ which is projected over $\overline{B}_{\ell_\infty}$. Our itinerary begins in the scalar-valued spectrum $\mathcal{M}_\infty(B_{c_0})$: by expanding a result by Cole, Gamelin and Johnson (1992) we prove that on each fiber  there are $2^c$ disjoint analytic Gleason isometric copies  of $B_{\ell_\infty}$. For the vector-valued case, building on the previous result we obtain $2^c$ disjoint analytic Gleason isometric copies  of $B_{\mathcal{H}^\infty(B_{c_0},\ell_\infty)}$ on each fiber. We also take a look at the relationship between fibers and  Gleason parts for both vector-valued spectra $\mathcal{M}_{u,\infty}(B_{c_0},B_{c_0})$ and $\mathcal{M}_\infty(B_{c_0},B_{c_0})$.
\end{abstract}

		\maketitle
		\bigskip

\section*{Introduction}

In 1909, David Hilbert wrote an article \cite{Hilbert}, which is now considered as one of the pioneering works of the theory of analytic functions in infinitely many variables. There he claimed his intention of transferring the main theorems about analytic functions of several variables to the infinite dimensional setting. Following his path, the open unit ball of $c_0$ appears as a natural domain for analytic functions in infinitely many variables since it is the usual infinite dimensional version of the  polydisc $\D^n$. Even if infinite dimensional holomorphy has been developed along other directions than Hilbert's initial approach, the open unit ball of $c_0$ has continued to be a favorite domain for bounded holomorphic functions.

Our interest here is describing the set of homomorphisms between algebras of bounded holomorphic functions on the open unit ball of $c_0$. While pursuing this objective, we accomplish a description of the fibers of the spectrum of the algebra of bounded holomorphic functions on the open unit ball of $c_0$ which improves and completes results of \cite{ColeGamelinJohnson, AronFalcoGarciaMaestre, AronDimantLassalleMaestre}.
In order to explain more precisely our goal  we need to recall some definitions and introduce some notation.

For a complex Banach space $X$, let $\H^\infty(B_X)$ be the space of bounded holomorphic functions on $B_X$ (the open unit ball of $X$). The (scalar-valued) spectrum of this uniform algebra is the set $\M_\infty(B_X)=\{\varphi: \H^\infty(B_X)\to\mathbb C$ nonzero algebra homomorphisms$\}$. Since $X^*$ is contained in $\H^\infty(B_X)$, there is a natural projection $\pi: \M_\infty(B_X)\to\overline B_{X^{**}}$ given by $\pi(\varphi)=\varphi|_{X^*}$. Through this projection, $\M_\infty(B_X)$ has a fibered structure: for each $z\in \overline B_{X^{**}}$ the set $\pi^{-1}(z)=\{\varphi\in \M_\infty(B_X):\ \pi(\varphi)=z\}$ is called the \textit{fiber} over $z$. For an infinite dimensional space $X$, the study of the fibers of its spectrum has been performed in several articles, as \cite{AronColeGamelin, ColeGamelinJohnson, Farmer, AronFalcoGarciaMaestre}. Frequently, a useful sub-algebra of $\H^\infty(B_X)$ is considered: $\A_u(B_X)$ is the space of holomorphic functions on $B_X$ which are uniformly continuous. The spectrum  $\M_u(B_X)=\{\varphi: \A_u(B_X)\to\mathbb C$ nonzero algebra homomorphisms$\}$ is similarly projected onto $\overline B_{X^{**}}$ and the fibering over this set is consequently defined. By \cite{AronBerner, DavieGamelin}, there is a canonical extension $[f\mapsto \widetilde f]$  from $\H^\infty(B_X)$ to $\H^\infty(B_{X^{**}})$ which is an isometric homomorphism of Banach algebras.  In this way, for the spectrum $\M_\infty(B_X)$, in the fiber   over each $z\in B_{X^{**}}$  there is a distinguished element $\delta_z$ given by $\delta_z(f)=\widetilde f(z)$. The extension $[f\mapsto\widetilde f]$ is also an isometric algebra homomorphism from $\A_u(B_X)$ to $\A_u(B_{X^{**}})$. Hence, for every $f\in\A_u(B_X)$ the function $\widetilde f$ is uniformly continuous in $B_{X^{**}}$ so it can be  extended to $S_{X^{**}}$ (the sphere of $X^{**}$). Thus, for the spectrum $\M_u(B_X)$, there is a distinguished element $\delta_z$ in the fiber over $z$ for every $z\in \overline B_{X^{**}}$.

For $\A=\H^\infty(B_X)$ or $\A_u(B_X)$, the spectrum $\M(\A)$ is partitioned into equivalence classes called \textit{Gleason parts}. Since  $\M(\A)$ is contained in the sphere of $\A^*$ it is clear that $\|\varphi-\psi\|\le 2$, for every $\varphi,\ \psi\in \M(\A)$. For $\varphi \in \mathcal M(\mathcal A)$, the {\em Gleason part} of $\varphi$  is the set
\[\mathcal {GP}(\varphi) = \{ \psi : \ \rho(\varphi,\psi) < 1\}=\{\psi : \ \|\varphi - \psi\| < 2\},\]
where $\rho(\varphi, \psi) =\sup\{|\varphi(f)|:  \  f \in \A, \|f\| \leq 1, \psi(f) = 0 \}$.
In $\M(\A)$, the usual distance between the elements $\|\varphi-\psi\|$ is referred as the \textit{Gleason metric} and $\rho(\varphi, \psi)$ is called the \textit{ pseudo-hyperbolic distance}. The study of Gleason parts for $\M_\infty(B_X)$ and $\M_u(B_X)$ for infinite dimensional $X$ (with a special focus in the case $X=c_0$) was initiated in \cite{AronDimantLassalleMaestre}.

In \cite{DimantSinger} we consider, for complex Banach spaces $X$ and $Y$, the set $\M_\infty(B_X,B_Y)$ that we called \textit{vector-valued spectrum},
 defined by
\[
\M_\infty(B_X,B_Y)=\{ \Phi: \H^\infty(B_X)\to \H^\infty(B_Y) \textrm{ nonzero algebra homomorphisms}\}.
\] 

This vector-valued spectrum is fibered over the closed unit ball of $\H^\infty(B_Y, X^{**})=\{f:B_Y\to X^{**}$ bounded holomorphic functions$\}$ through the projection (with image $\overline B_{\H^\infty(B_Y, X^{**})}$)
\begin{align*}
			\xi \colon \M_\infty(B_X,B_Y) &\to \mathcal{H}^\infty(B_Y,X^{**}), \\
			\Phi & \mapsto \left[ y \mapsto [x^* \mapsto \Phi(x^*)(y)] \right].
		\end{align*} 
As in the scalar-valued case, this projection gives rise to a fibered structure of the spectrum. For each $g\in\overline B_{\H^\infty(B_Y, X^{**})}$, the \textit{fiber} over $g$ is the set 
$$
\mathscr{F}(g)=\{\Phi\in \M_\infty(B_X,B_Y):\ \xi(\Phi)=g\}.
$$
We can associate to each function $g\in \mathcal{H}^{\infty}(B_Y,X^{**}) $ satisfying $g(B_Y)\subset B_{X^{**}}$  a composition homomorphism $ C_g\in \M_\infty(B_X,B_Y)$ given by
\[
 C_g(f)=\widetilde f\circ g,\quad\textrm{ for all }f\in\H^\infty(B_X),
\] (where $\widetilde f\in \H^\infty(B_{X^{**}})$ is the canonical extension  of $f$ referred above). It is easy to see that $C_g$ belongs to $\mathscr{F}(g)$ and so each fiber over a function $g\in \mathcal{H}^{\infty}(B_Y,X^{**}) $ with $g(B_Y)\subset B_{X^{**}}$ has a distinguished element.

In a previous article \cite{DimantSinger} we studied the fibers for this spectrum and exhibited conditions assuring the  containment of analytic copies of  balls into some particular fibers. Now, focusing on the case $X=Y=c_0$ we could prove that on \textit{every} fiber we can insert infinitely many disjoint analytic copies of the unit ball of $\H^\infty(B_{c_0}, \ell_\infty)$, isometrically for the Gleason metric. To achieve this result we need first to produce a similar statement for the scalar-valued spectrum: there are infinitely many disjoint analytic Gleason isometric copies of $B_{\ell_\infty}$ into \textit{any} fiber of $\M_\infty(B_{c_0})$. This is our main result of Section \ref{caso-escalar}. It is done by expanding a construction due to Cole, Gamelin and Johnson in \cite[Th. 6.7]{ColeGamelinJohnson} and both gives a complete answer to a question posed in \cite{AronFalcoGarciaMaestre} and an improvement of  \cite[Cor. 3.12]{AronDimantLassalleMaestre}.
In Section \ref{caso-vectorial} the vector-valued version of this result is presented.
 
The notion of Gleason part has its version for the vector-valued spectrum. Indeed, for $\Phi\in\M_\infty(B_X, B_Y)$ we can define
\[\mathcal {GP}(\Phi) = \{\Psi : \ \|\Phi - \Psi\| < 2\}=\{ \Psi : \ \sigma(\Phi, \Psi) =\sup_{y\in B_Y}\rho(\delta_y\circ\Phi,\delta_y\circ\Psi) < 1\}.\]
As in the scalar-valued case, this leads to a partition of $\M_\infty(B_X, B_Y)$ into equivalence classes. This concept, with the name of \textit{norm vicinity} or \textit{path component}, was previously studied in several articles (see, for instance, \cite{AronGalindoLindstrom, chu-hugli-mackey,GalindoGamelinLindstrom,GorkinMortiniSuarez, HozokawaIzuchiZheng,MacCluerOhnoZhao}). In \cite{DimantSinger} we gave some relationships between fibers and Gleason parts for the spectrum $\M_\infty(B_X, B_Y)$. Now we continue in this line devoting ourselves to the case $X=Y=c_0$. Inspired by what is done in \cite{AronDimantLassalleMaestre} for the scalar-valued spectrum, we begin by studing Gleason parts for the simpler spectrum $$\M_{u,\infty}(B_{c_0},B_{c_0})=\{\Phi:\A_u(B_{c_0})\to \H^\infty (B_{c_0})\textrm{ nonzero algebra homomorphisms}\}.$$ 
Then, we look into Gleason parts for the more complex spectrum $\M_\infty(B_{c_0},B_{c_0})$. These are the topics of Section \ref{Gleason-parts}.

 For general theory about infinite dimensional holomorphy we refer the reader to the classical books  \cite{Dineen, MujicaLibro, Chae} or to the  recently published beautiful monograph  \cite{DirSer}. Note that in this last book the space $\H^\infty(B_{c_0})$ has a leading role: it is introduced in (and studied from) Chapter 2 while the theory of holomorphic functions on arbitrary Banach spaces is put off until Chapter 15. The exact quotation from Hilbert's article that we have referred at the beginning of this Introduction can be seen either in  \cite[p. 10]{DirSer} or in \cite[p. 188]{pietsch2007history}.

	\section{Fibers for the scalar-valued spectrum $\mathcal{M}_\infty(B_{c_0})$} \label{caso-escalar}

	We begin by describing the fibers of  the scalar-valued spectrum $\mathcal{M}_\infty(B_{c_0})$ because, as in \cite{DimantSinger}, we can profit from this knowledge in order to work in the vector-valued framework.
	
	In \cite[Th. 6.7]{ColeGamelinJohnson}, Cole, Gamelin and Johnson defined an analytic injection
		\begin{align*}
			\Phi: B_{\ell_\infty} \times B_{\ell_\infty} \to \mathcal{M}_\infty(B_{c_0}),
		\end{align*}
	such that the image of $\Phi(z,-)$ is contained in the fiber $\pi^{-1}(z)$. As a result, for each $z \in B_{\ell_\infty}$, the fiber over $z$ contains an analytic copy of $B_{\ell_\infty}$. Since the spectrum $\mathcal{M}_\infty(B_{c_0})$ is projected over $\overline B_{\ell_\infty}$, the question about whether the same result holds for the fibers over those $z \in S_{\ell_\infty}$ naturally arises. Aron, Falcó, García and Maestre \cite[Th. 2.2]{AronFalcoGarciaMaestre}, through a different construction managed to produce an analytic copy of $B_{\ell_\infty}$  into the fiber over $z$,
	for points $z$ in the \emph{infinite torus} $\T^\infty$ (that is, those $z$ satisfying $|z_n| = 1$ for all $n \in \mathbb{N}$). They  could also extend  this construction to boundary points $z$ having infinitely many coordinates with modulus 1. However, the question for boundary points with no (or finitely many) modulus 1 coordinates has remained open. Specifically, we can read in \cite[Rmk. 2.10]{AronFalcoGarciaMaestre} the following: ``... we do not know if $B_{\ell_\infty}$ can be embedded in the fiber over $z$, for $z$ in the unit sphere of $\ell_\infty$ but $|z_n|<1$ for all $n$, as for example $(\frac{n-1}{n})$''.

Our main result in this section answers this question and completes the picture of the fibers. We obtain it by modifying the construction of Cole, Gamelin and Johnson to reach points $z$ with a subsequence of coordinates at a positive distance from 1 and then, by means of  Gleason isometries between fibers, we manage to get to all $z$'s. In this way we prove that there is an analytic injection from $B_{\ell_\infty}$ into the fiber over $z$, for \textit{any} $z \in \overline{B}_{\ell_\infty}$. Moreover, as Cole, Gamelin and Johnson had proved for the fiber over 0, we obtain that the injection is in fact an isometry for the Gleason metric. 

It is important to comment that the above  result has simultaneously  been proved  by Choi, Falc\'o, Garc\'{\i}a, Jung and Maestre in \cite{ChoiFalcoGarciaJungMaestre}. Anyway, their argument is not the same, specifically the construction of the injection is different. For us, in order to apply this procedure to the vector-valued spectrum (see Theorem \ref{bola-en-fibra-vectorial}) it is relevant not only the statement of the theorem but also the construction performed in the proof of the theorem. In fact, we make use of our construction to prove the vector-valued result.

In addition, we can go a step further to obtain not just \textit{one} analytic  copy of the ball $B_{\ell_\infty}$ on each fiber but $2^c$ disjoint copies. Indeed, we prove that for any $z\in \overline{B}_{\ell_\infty}$ and for each $\eta\in\beta(\N)\setminus \N$ there is an analytic Gleason isometry $\Phi_z^\eta$ from $B_{\ell_\infty}$ into the fiber over $z$ and that any two of these copies are in different Gleason parts. 

We want to point out that a former version of the next theorem is referred in \cite[Rmk. 3.5]{AronDimantLassalleMaestre}. Precisely, at the moment that article was written, the statement of our Theorem \ref{ExtCGJ} was that there is \textit{one} analytic Gleason isometric copy of the ball $B_{\ell_\infty}$ on each fiber. Then, we have obtained this step forward with infinitely many disjoint balls on each fiber. Note also that this current statement turns out to be an improvement of \cite[Cor. 3.12]{AronDimantLassalleMaestre} because there it was proved the containment of $2^c$ \textit{discs} (instead of \textit{balls}) lying on different Gleason parts on the fiber over each $z$ in the \textit{open} (instead of \textit{closed}) unit ball of $\ell_\infty$.

	\begin{theorem}
		\label{ExtCGJ}
		For every $z \in \overline{B}_{\ell_\infty}$, there is a map
		\begin{align*}
		    \Phi_z: \beta(\N)\setminus \N \times B_{\ell_\infty} \to \pi^{-1}(z) \subset \mathcal{M}_\infty(B_{c_0}).
		\end{align*}
		satisfying
		\begin{enumerate}
		    \item For all $\eta \in \beta(\N) \setminus \N$, the mapping $\Phi_z^\eta:B_{\ell_\infty}\to \pi^{-1}(z)$ given by $\Phi_z^\eta(w) = \Phi_z(\eta,w)$ is an analytic Gleason isometry.
		    \item If $\eta_1 \not = \eta_2$ are in $\beta(\N) \setminus \N$, then the images $\Phi_z^{\eta_1}(B_{\ell_\infty})$ and $\Phi_z^{\eta_2}(B_{\ell_\infty})$ lie in different Gleason parts.
		\end{enumerate}
		\end{theorem}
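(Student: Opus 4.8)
The plan is to reduce the general case $z \in \overline{B}_{\ell_\infty}$ to two cases and then build the map $\Phi_z$ explicitly. First, call a point $z \in \overline{B}_{\ell_\infty}$ \emph{admissible} if there is a subsequence $(z_{n_k})_k$ and a constant $\delta>0$ with $|1-z_{n_k}| \geq \delta$ for all $k$ (equivalently, $z$ is not eventually forced toward $1$ in every coordinate after passing to the tail). I would first treat admissible $z$ directly: modify the Cole--Gamelin--Johnson construction from \cite[Th. 6.7]{ColeGamelinJohnson}, which produces an analytic injection $B_{\ell_\infty}\times B_{\ell_\infty}\to\M_\infty(B_{c_0})$ whose slices land in fibers over points of $B_{\ell_\infty}$, so that it instead lands in the fiber over an admissible boundary point. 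The key mechanism is the same as in \cite{ColeGamelinJohnson}: use a free ultrafilter $\eta\in\beta(\N)\setminus\N$ to take a limit along the subsequence $(n_k)$, and route the extra $B_{\ell_\infty}$-parameter $w$ through a sequence of disjoint blocks of coordinates so that the $\eta$-limit of the evaluation functionals defines a homomorphism $\Phi_z^\eta(w)\in\M_\infty(B_{c_0})$. Because the subsequence is chosen inside the ``distance $\geq\delta$ from $1$'' regime, the relevant series/products controlling analyticity converge, exactly as in the open-ball case; the projection $\pi(\Phi_z^\eta(w))$ is then computed to be $z$ by evaluating on coordinate functionals and using $\eta$-convergence.

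Next, I would handle the \emph{non-admissible} case, i.e.\ points $z$ for which $z_n\to 1$ after discarding finitely many coordinates, such as $z=(\tfrac{n-1}{n})$. For these I would not build the map from scratch but transport it via a Gleason isometry between fibers. The idea (already signalled in the Introduction) is: given such a $z$, choose an admissible $z'$ (e.g.\ $z'=0$, the original CGJ case) together with a ``rotation-type'' automorphism or composition operator on $\H^\infty(B_{c_0})$ whose induced action on $\M_\infty(B_{c_0})$ carries $\pi^{-1}(z')$ onto $\pi^{-1}(z)$ and is an isometry for the Gleason metric. Concretely one uses, coordinatewise, Möbius maps $\varphi_{a_n}:\D\to\D$ sending $0\mapsto a_n$ with the $a_n$ chosen so the resulting infinite product of automorphisms is a well-defined biholomorphism of $B_{c_0}$ sending $0$ to a point whose canonical extension lies over $z$; composition with this biholomorphism is an isometric algebra automorphism of $\H^\infty(B_{c_0})$, hence induces a Gleason isometry on the spectrum matching the fibers correctly. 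Defining $\Phi_z^\eta := (\text{that induced isometry})\circ\Phi_{z'}^\eta$ gives analytic Gleason isometries $B_{\ell_\infty}\to\pi^{-1}(z)$ for every $\eta$, proving (1) in all cases.

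For part (2), fix $z$ and $\eta_1\neq\eta_2$ in $\beta(\N)\setminus\N$. I would produce a single function $f\in\H^\infty(B_{c_0})$ with $\|f\|\leq 1$ that ``separates'' the two copies in the strong sense needed: namely, a function built from the block structure so that $\widetilde f$ on the relevant limit points takes value (close to) $1$ along the $\eta_1$-blocks and $0$ along the $\eta_2$-blocks — for instance an infinite sum $\sum_k \lambda_k\, g_k$ where $g_k$ is supported on the $k$-th block and $(\lambda_k)$ is a $\{0,1\}$-sequence belonging to $\eta_1$ but not $\eta_2$. Evaluating a homomorphism in $\Phi_z^{\eta_1}(B_{\ell_\infty})$ against $f$ yields a value with modulus bounded below away from $0$ uniformly, while a homomorphism in $\Phi_z^{\eta_2}(B_{\ell_\infty})$ kills $f$ (value $0$); one then checks this forces the pseudo-hyperbolic distance between any $\Psi_1\in\Phi_z^{\eta_1}(B_{\ell_\infty})$ and $\Psi_2\in\Phi_z^{\eta_2}(B_{\ell_\infty})$ to equal $1$, i.e.\ $\|\Psi_1-\Psi_2\|=2$, so the two images lie in different Gleason parts. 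The main obstacle is the first step: carrying out the modified CGJ construction so that analyticity (as a map of Banach manifolds $B_{\ell_\infty}\to\M_\infty(B_{c_0})$, i.e.\ weak-$*$ holomorphy of $w\mapsto\Phi_z^\eta(w)(f)$ for every $f$) and the Gleason-isometry property both survive the passage to a boundary fiber; the rotation/transport argument in the non-admissible case, and the separating-function argument for (2), are comparatively routine once the block structure is set up carefully.
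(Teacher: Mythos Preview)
Your overall architecture matches the paper's: handle ``admissible'' $z$ (those with a subsequence of coordinates bounded away from $1$) by a direct modification of the Cole--Gamelin--Johnson block construction, and then reach the remaining $z$ by transporting along a Gleason isometry between fibers. The separation of the two $\eta$'s by a block-adapted test function is also the right idea. But two concrete steps in your proposal would fail as written.

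\medskip

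\textbf{The non-admissible transport.} You propose to carry $\pi^{-1}(z')$ onto $\pi^{-1}(z)$ via a coordinatewise M\"obius automorphism of $B_{c_0}$ sending $0$ to (a point lying over) $z$, with the example $z'=0$. This cannot work when $z\in S_{\ell_\infty}$: every biholomorphism of $B_{c_0}$ is of the form $(x_n)\mapsto\bigl(\lambda_n\frac{a_n-x_{\sigma(n)}}{1-\overline{a_n}x_{\sigma(n)}}\bigr)$ with $(a_n)\in B_{c_0}$, and its bitranspose is a biholomorphism of $B_{\ell_\infty}$, hence preserves the open ball and the sphere separately. So no such map takes $0$ to a boundary point $z$, and more generally $\pi^{-1}(0)$ cannot be carried onto $\pi^{-1}(z)$ for $z\in S_{\ell_\infty}$ by this mechanism. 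The paper's fix is much simpler and stays on the sphere: if $z_n\to 1$, pick any $\lambda\in\mathbb{T}\setminus\{1\}$; then $\lambda z$ is admissible (its coordinates tend to $\lambda\neq 1$), and the \emph{linear} rotation $\theta(x)=(\lambda x_n)$ on $c_0$ induces a Gleason isometry of $\M_\infty(B_{c_0})$ taking $\pi^{-1}(\lambda z)$ onto $\pi^{-1}(z)$ (this is exactly Remark~\ref{equalmodulus}). So the transport is by a unimodular rotation between two boundary fibers, not by a M\"obius map from the interior.

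\medskip

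\textbf{The separating function in (2).} Your proposed test function $f=\sum_k\lambda_k g_k$ with $\lambda_k\in\{0,1\}$ and $g_k$ ``supported on the $k$-th block'' will not in general satisfy $\|f\|\le 1$: even if $g_k$ depends only on the coordinates of the $k$-th block, a point $x\in B_{c_0}$ sees all blocks simultaneously, so the sum can blow up. The paper uses \emph{products} instead: a Blaschke-type function
\[
G_{1,N}^A(\omega)=\prod_{\substack{j\ge N\\ j\in A}}\frac{\alpha_j-\omega_{1+m_j}}{1-\alpha_j\omega_{1+m_j}},
\]
where $A\subset\N$ is an infinite set with $\eta_1\in\overline{A}$ and $\eta_2\in\overline{\N\setminus A}$. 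Each factor has modulus $\le 1$, so $\|G_{1,N}^A\|\le 1$ automatically; one factor vanishes at $\Phi_z^k(0)$ for every $k\in A$, forcing $\Phi_z^{\eta_1}(0)(G_{1,N}^A)=0$, while along $k\in\N\setminus A$ the product survives and, via Lemma~\ref{lematecnico}, $\Phi_z^{\eta_2}(0)(G_{1,N}^A)\to 1$ as $N\to\infty$. This is the device that gives $\rho=1$ between the two images; your additive construction would need to be replaced by a multiplicative one of this kind.

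\medskip

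Finally, for part (1) you flag the isometry as ``the main obstacle'' without indicating how to get it. In the paper this is not soft: the inequality $\|\Phi_z^\eta(w)-\Phi_z^\eta(v)\|\ge\|\delta_w-\delta_v\|$ is obtained by evaluating on the same Blaschke-type products $G_{i,N}$ and showing, via the technical Lemma~\ref{lematecnico} on convergence of the infinite products $\prod_j\frac{\alpha_j-z_j}{1-\alpha_j z_j}$, that $\Phi_z^\eta(w)(G_{i,N})=C_{i,N}\,w_i$ with $C_{i,N}\to 1$. This is precisely where the hypothesis ``coordinates bounded away from $1$'' is used, and it is the technical heart of the argument.
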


The proof of this theorem is performed in several steps. To prove that our analytic mappings are isometric (from the Gleason metric of $B_{\ell_\infty}$ to the Gleason metric of the spectrum $\mathcal{M}_\infty(B_{c_0})$) and to deduce that images associated to different elements in $\beta(\N)\setminus \N$ lie in different Gleason parts we  make use of the following technical lemma regarding infinite products.
		\begin{lemma}
		\label{lematecnico}
				Let $(\alpha_k)$ be an increasing sequence of positive real numbers converging to 1, such that $\sum_j (1 - \alpha_j) < \infty$. Consider a point $z = (z_j)\in \overline{B}_{\ell_\infty}$ for which there exists $\delta>0$ satisfying $|z_j - 1| > \delta$ for all $j$. Then, for each $N \in \mathbb{N}$ such that $1-\alpha_j  < \delta/4$ for all $j \geq N$, the following infinite product converges (to a nonzero number):
		\begin{align}
		\label{product}
			\prod_{j=N}^{\infty}\frac{\alpha_j - z_j}{1 - \alpha_jz_j}.
		\end{align}

		Additionally, if $(\ell(k))_k$ is an increasing sequence of positive integers converging to $\infty$, then
		\begin{align}
		\label{limit}
		\lim_{k\to \infty} \prod_{j=N}^{\ell(k)}\frac{\alpha_j - \alpha_k z_j}{1-\alpha_j\alpha_kz_j} = \prod_{j=N}^{\infty}\frac{\alpha_j - z_j}{1 - \alpha_jz_j}.
		\end{align}		
	\end{lemma}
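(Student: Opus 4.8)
The plan is to reduce both assertions to the classical absolute-convergence test for infinite products: $\prod(1+c_j)$ converges, and to a nonzero limit if no factor vanishes, whenever $\sum|c_j|<\infty$. For the first assertion I would start from the algebraic identity
\[
\frac{\alpha_j-z_j}{1-\alpha_jz_j}=1+a_j,\qquad a_j:=\frac{(\alpha_j-1)(1+z_j)}{1-\alpha_jz_j},
\]
obtained by adding and subtracting $1-\alpha_jz_j$ in the numerator. Since $z\in\overline{B}_{\ell_\infty}$ one has $|1+z_j|\le2$, and writing $1-\alpha_jz_j=(1-z_j)+z_j(1-\alpha_j)$ together with $|z_j-1|>\delta$ and $1-\alpha_j<\delta/4$ for $j\ge N$ gives $|1-\alpha_jz_j|\ge\delta-(1-\alpha_j)\ge 3\delta/4$. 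Hence $|a_j|\le\frac{8}{3\delta}(1-\alpha_j)$, and the hypothesis $\sum_j(1-\alpha_j)<\infty$ yields $\sum_{j\ge N}|a_j|<\infty$, so the product \eqref{product} converges; moreover no factor vanishes (if $\alpha_j=z_j$ then $|z_j-1|=1-\alpha_j<\delta/4<\delta$, a contradiction), so the limit is nonzero.

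For \eqref{limit} the same manipulation gives $\frac{\alpha_j-\alpha_kz_j}{1-\alpha_j\alpha_kz_j}=1+b_j^{(k)}$ with $b_j^{(k)}=\frac{(\alpha_j-1)(1+\alpha_kz_j)}{1-\alpha_j\alpha_kz_j}$, and the decisive point will be a bound on $|b_j^{(k)}|$ that is \emph{uniform in $k$}. Using $1-\alpha_j\alpha_k\le(1-\alpha_j)+(1-\alpha_k)$, for $j\ge N$ and every $k$ large enough that $1-\alpha_k<\delta/4$ one gets $|1-\alpha_j\alpha_kz_j|\ge\delta/2$, hence $|b_j^{(k)}|\le\frac{4}{\delta}(1-\alpha_j)$ — a summable bound not depending on $k$.

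With this in hand I would fix $\varepsilon>0$, pick $M\ge N$ with $\frac{8}{\delta}\sum_{j>M}(1-\alpha_j)<\varepsilon$ (so that the tail of \eqref{product} and, for all large $k$, the tails of the products in \eqref{limit} are all controlled), and for $k$ so large that $\ell(k)>M$ and $1-\alpha_k<\delta/4$ split
\[
\prod_{j=N}^{\ell(k)}\frac{\alpha_j-\alpha_kz_j}{1-\alpha_j\alpha_kz_j}=H_k\,T_k,\qquad H_k:=\prod_{j=N}^{M}\frac{\alpha_j-\alpha_kz_j}{1-\alpha_j\alpha_kz_j},\quad T_k:=\prod_{j=M+1}^{\ell(k)}\bigl(1+b_j^{(k)}\bigr),
\]
and similarly $\prod_{j=N}^{\infty}\frac{\alpha_j-z_j}{1-\alpha_jz_j}=H_\infty\,T_\infty$. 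By the elementary estimate $\bigl|\prod(1+c_j)-1\bigr|\le\exp\bigl(\sum|c_j|\bigr)-1$ one gets $|T_k-1|,|T_\infty-1|\le e^{\varepsilon}-1$, while $H_k$ is a product of at most $M-N+1$ factors, each depending continuously on the parameter $\alpha_k\in[0,1]$ (the denominators never vanish since $|\alpha_j\alpha_kz_j|\le\alpha_j<1$), so $H_k\to H_\infty$ as $k\to\infty$. Writing $H_kT_k-H_\infty T_\infty=H_k(T_k-T_\infty)+(H_k-H_\infty)T_\infty$ and using that $\{H_k\}$ and $T_\infty$ are bounded, the $\limsup_k$ of $\bigl|\prod_{j=N}^{\ell(k)}(1+b_j^{(k)})-\prod_{j=N}^{\infty}(1+a_j)\bigr|$ is at most a constant multiple of $e^{\varepsilon}-1$, and letting $\varepsilon\to0$ proves \eqref{limit}.

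The algebraic identity and the product estimates are routine. The one real obstacle is \eqref{limit}, which is a genuine double limit: the index $k$ drives simultaneously the shift $\alpha_k\to1$ and the length $\ell(k)\to\infty$. The resolution is exactly the $k$-uniform tail bound $|b_j^{(k)}|\le\frac{4}{\delta}(1-\alpha_j)$, which lets one freeze the finitely many leading factors (handled by continuity in $\alpha_k$) and absorb everything beyond them into an arbitrarily small tail controlled uniformly in $k$.
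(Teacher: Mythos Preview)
Your proof is correct and follows essentially the same approach as the paper's: both use the identity $\frac{\alpha_j-w}{1-\alpha_jw}=1-\frac{(1-\alpha_j)(1+w)}{1-\alpha_jw}$, establish the same $k$-uniform summable bound on the deviation from $1$ (via $|1-\alpha_j\alpha_kz_j|\ge\delta/2$), and then run a head--tail $\varepsilon$-argument in which the finitely many leading factors converge by continuity and the tails are uniformly small. The only difference is organizational---the paper routes its $\varepsilon/4$ estimate through the auxiliary infinite product $\prod_{j\ge N}\frac{\alpha_j-\alpha_kz_j}{1-\alpha_j\alpha_kz_j}$ while you split directly as $H_kT_k$---and you are slightly more careful in making the restriction to large $k$ explicit and in checking that no factor vanishes.
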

	\begin{proof}
		We first show that the product in \eqref{product} is convergent. Indeed, this is derived from the  inequality:
		\[
		\left| 1 - \frac{\alpha_j - z_j}{1 - \alpha_j z_j} \right| = \left|\frac{1 + z_j}{1 - \alpha_jz_j}\right|(1-\alpha_j)
	\leq \frac{2}{\delta3/4}(1-\alpha_j),
		\]
		where the last term is summable by our hypothesis.
		
		Secondly we study, for each $k \in \mathbb{N}$, the rate of convergence for the infinite product $\prod_{j=N}^{\infty}\frac{\alpha_j - \alpha_kz_j}{1-\alpha_j\alpha_kz_j}$. We have the following bound independent of $k$:
		\[
		\left|1 - \frac{\alpha_j - \alpha_kz_j}{1-\alpha_j\alpha_kz_j}  \right| = \left|\frac{1 + \alpha_kz_j}{1 - \alpha_j\alpha_kz_j}\right|(1-\alpha_j)
		\leq \frac{2}{\delta/2}(1-\alpha_j).
		\]
		Then, for any given $\varepsilon > 0$, there is a number $N_1 \in \mathbb{N}$ such that for every $\widetilde{N} \geq N_1$,
		\begin{align*}
		\left| \prod_{j=N}^{\infty}\frac{\alpha_j - \alpha_kz_j}{1 - \alpha_j\alpha_kz_j} - \prod_{j=N}^{\widetilde{N}}\frac{\alpha_j - \alpha_kz_j}{1 - \alpha_j\alpha_kz_j} \right| < \varepsilon/4,
		\end{align*}
		for all $k \in \mathbb{N}$, and
		\begin{align*}
		\left| \prod_{j=N}^{\infty}\frac{\alpha_j - z_j}{1 - \alpha_jz_j} - \prod_{j=N}^{\widetilde{N}}\frac{\alpha_j - z_j}{1 - \alpha_jz_j} \right| < \varepsilon/4.
		\end{align*}
		Now, we can find $k_0 \in \mathbb{N}$ such that for every $k \geq k_0$ we get
		\begin{align*}
		\left|\prod_{j=N}^{N_1}\frac{\alpha_j - \alpha_kz_j}{1-\alpha_j\alpha_kz_j} - \prod_{j=N}^{N_1}\frac{\alpha_j - z_j}{1 - \alpha_j z_j}\right| < \varepsilon/4.
		\end{align*}
		Finally, take $k_0'=\min \{k: \ell(k)\ge N_1\}$ and $k_1=\max \{k_0, k_0'\}$. Hence, for $k\ge k_1$,  the expression
		\begin{align*}
		\left|\prod_{j=N}^{\ell(k)}\frac{\alpha_j - \alpha_k z_j}{1 - \alpha_j \alpha_k z_j} - \prod_{j=N}^{\infty}\frac{\alpha_j - z_j}{1 - \alpha_jz_j}\right|
		\end{align*}
		is bounded by
		\begin{align*}
		\left|\prod_{j=N}^{\ell(k)}\frac{\alpha_j - \alpha_k z_j}{1 - \alpha_j \alpha_k z_j} - \prod_{j=N}^{\infty}\frac{\alpha_j - \alpha_k z_j}{1 - \alpha_j \alpha_k z_j}\right| + \left|\prod_{j=N}^{\infty}\frac{\alpha_j - \alpha_k z_j}{1 - \alpha_j \alpha_k z_j} - \prod_{j=N}^{N_1}\frac{\alpha_j - \alpha_k z_j}{1 - \alpha_j \alpha_k z_j}\right|\\
		+  \left|\prod_{j=N}^{N_1}\frac{\alpha_j - \alpha_k z_j}{1 - \alpha_j \alpha_k z_j} -  \prod_{j=N}^{N_1}\frac{\alpha_j - z_j}{1 - \alpha_j z_j}\right| + \left|\prod_{j=N}^{N_1}\frac{\alpha_j - z_j}{1 - \alpha_j z_j} -  \prod_{j=N}^{\infty}\frac{\alpha_j - z_j}{1 - \alpha_jz_j}\right| < \varepsilon.
		\end{align*}
		Thus,  the desired limit is proved.
	\end{proof}
	As we have already mentioned,  Lemma \ref{lematecnico} will be used to prove several Gleason isometries inside the fiber over $z$. Even if this lemma only works for points $z$ with coordinates at a positive distance from 1, we can overcome the restriction through the following remark.
	\begin{remark}
	\label{equalmodulus}		
		Let $z,w \in \overline{B}_{\ell_\infty}$ such that $|z_n|= |w_n|, \ \forall n \in \mathbb{N}$. Then,  the fibers  $\pi^{-1}(z)$ and $\pi^{-1}(w)$ are Gleason isometric.
	\end{remark}

Indeed, \cite[Prop. 1.6]{AronDimantLassalleMaestre} gives sufficient conditions for an automorphism $\Phi:B_X \to B_X$ to induce a Gleason isometry in the spectrum. In our particular setting, we can write $w_n = \lambda_n z_n$ for $\lambda_n \in \mathbb{T}$ and define the mapping
	
%	Indeed,  we can write $w_n = \lambda_n z_n$, for certain numbers $\lambda_n\in\mathbb T$ and apply \cite[Prop. 1.6]{AronDimantLassalleMaestre} to the following isomorphism:
		\begin{align*}
		\theta: \ell_\infty &\to \ell_\infty \\
		\theta(z) &= (\lambda_n z_n)_n.
		\end{align*}
Note that $\theta$ is an isometric linear mapping satisfying $\theta(B_{c_0}) = B_{c_0}$. Additionally, $\theta$ coincides with the bitraspose of its restriction to $c_0$. It follows that the induced mapping $\Lambda_\theta: \mathcal{M}_\infty(B_{c_0}) \to \mathcal{M}_\infty(B_{c_0})$ is an onto Gleason isometry taking the fiber over $z$ onto the fiber over $w$.

 	\begin{proof}[Proof of Theorem \ref{ExtCGJ}.]
 		Fix $(\alpha_k)$ an increasing sequence of positive real numbers converging to 1 such that  $\sum_k (1 - \alpha_k) < \infty$ and take  a sequence of non-negative integers $(m_k)_k$ satisfying $m_k + k < m_{k+1}$. We first prove the result under additional hypotheses over $z$. Then, we extend it to any $z \in \overline{B}_{\ell_\infty}$.
 		
 		Step 1:
 		Let  $z \in \overline{B}_{\ell_\infty}$ for which there exists $\delta > 0$ satisfying $|z_k - 1| > \delta$ for all $k \in \mathbb{N}$. Define, for each $k \in \mathbb{N}$, the mapping $\Phi_z^k: B_{\ell_\infty} \to B_{\ell_\infty}$ as follows
 		\[ 	(\Phi_z^k(w))_j = \left\{ \begin{array}{cc}
 			\alpha_k z_j, & \text{ for } 1 \leq j \leq k\\
            & \\
 			0,& \text{ for } k+1 \leq j \leq m_k \\
            & \\
 			\dfrac{\alpha_k - w_i}{1 - \alpha_kw_i},& \text{ for }  j = m_k + i, 1 \leq i \leq k\\
            & \\
 			0,& \text{ for } j > m_k + k. \end{array}\right.
 		\]
 By regarding each $\Phi_z^k(w)$ as an element in $\M_\infty(B_{c_0})$, we can now define
 \begin{align*}
     \Phi_z:\N \times B_{\ell_\infty} &\to \M_\infty(B_{c_0})\\
     \Phi_z(k,w) &= \Phi_z^k(w).
 \end{align*}
 Since $\M_\infty(B_{c_0})$ is a weak-star compact subset of $(\H^\infty(B_{c_0}))^*$, for each $w \in B_{\ell_\infty}$ there is a unique continuous extension
 \begin{align*}
    \Phi_z(-,w):\beta(\N) \to \M_\infty(B_{c_0}).
 \end{align*}
 We now write $\Phi_z^\eta(w) = \Phi_z(\eta,w).$ Note that, if $n \leq k$,
 \[\langle \Phi_z^k(w), e_n \rangle = \alpha_kz_n\underset{k\to\infty}{\longrightarrow} z_n.\]
 So $\pi(\Phi_z^\eta(w)) = z$ for every $w\in B_{\ell_\infty}$ and $\eta \in \beta(\N) \setminus \N$, meaning that the image of $\Phi_z^\eta$ is contained in $\pi^{-1}(z)$.
 To see that $\Phi_z^\eta$ is  an analytic mapping, note that, for a given $f \in \H^\infty(B_{c_0})$, the sequence $(f \circ \Phi_z^k)_k$ is contained in the weak-star compact set $\|f\|\overline B_{\H^\infty(B_{\ell_\infty})}$. A standard argument then yields that $f \circ \Phi_z^\eta$ is indeed in $\|f\|\overline B_{\H^\infty(B_{\ell_\infty})}$ and, in particular, holomorphic.
 
 Now we need to check that $\Phi_z^\eta$ is isometric. By \cite[Th. 2.4]{AronDimantLassalleMaestre} (see also \cite[(6.1)]{ColeGamelinJohnson}), we know that for all $w,v\in B_{\ell_\infty}$ and $ k\in\mathbb N$,
 \[ \label{rhodeltaphi}
 \rho (\delta_{\Phi_z^k(w)},\delta_{\Phi_z^k(v)})=\sup_{j\in \mathbb N} \left|\frac{\Phi_z^k(w)_j-\Phi_z^k(v)_j}{1-\overline{\Phi_z^k(w)}_j\Phi_z^k(v)_j}\right| = \sup_{1\le i\le k} \left|\frac{w_i-v_i}{1-\overline{w}_i v_i}\right|\le \rho(\delta_w, \delta_v).
 \]
 From this it is easily derived that
 \[
 \|\Phi_z^\eta(w)-\Phi_z^\eta(v)\|\le \| \delta_w-\delta_v\|\quad\textrm{for all }w,v\in B_{\ell_\infty},\  \eta\in\beta(\N)\setminus\N.
 \]
 To prove the reverse inequality we consider, for each $i\le N$, the mapping
 	\begin{align*}
 		G_{i,N}(\omega) = \prod_{j=N}^{\infty} \frac{\alpha_j - \omega_{i+m_j}}{1 - \alpha_j \omega_{i+m_j}}.
 	\end{align*}
 	A straightforward computation shows that the modulus of each factor is bounded by 1. Additionally, for every $ 0 < r < 1$ and $\omega \in rB_{c_0}$   we have that
 	\[ \left|1- \frac{\alpha_j - \omega_{i+m_j}}{1 - \alpha_j \omega_{i+m_j}} \right| = \left|\frac{1 + \omega_{i+m_j}}{1 - \alpha_j \omega_{i+m_j}}\right|(1 - \alpha_j)
 	\leq \frac{1 + r}{1 - r}(1 - \alpha_j).
 	\]
 	It follows that the partial products involved in $G_{i,N}$ converge uniformly on $rB_{c_0}$ for each $ 0 < r < 1$. Hence, applying a Weierstrass type theorem \cite[Th. 2.13]{DirSer} to the partial products, we get that $G_{i,N} \in \mathcal{H}^\infty(B_{c_0})$ with $\|G_{i,N}\|\le 1$, for all $i\le N $. We now consider for $k>N $ the composition $G_{i,N} (\Phi_z^k(w))$. Observe that whenever  $\lim_k G_{i,N} (\Phi_z^k(w))$ exists, it must coincide with $\Phi_z^\eta(w) (G_{i,N})$ for $\eta\in \beta(\N) \setminus \N$.
	If we denote with $\ell(k)$ the maximum $j \in \mathbb{N}$ such that $i + m_j \leq k$,  we can write
	\begin{align*}
		G_{i,N}(\Phi_z^k(w)) = \left(\prod_{j = N}^{\ell(k)} \frac{\alpha_j - \alpha_kz_{i+m_j}}{1 - \alpha_j\alpha_kz_{i+m_j}}\right) \cdot \left(\prod_{j = \ell(k) + 1}^{k-1}\alpha_j\right) \cdot w_i \cdot \left(\prod_{j= k+1}^{\infty}\alpha_j\right).
	\end{align*}
Since $\alpha_k\nearrow 1$ there exists $k_0\in\mathbb N$ such that $1-\alpha_k<\delta/4$ for every $k\ge k_0$. Then, if $N\ge k_0$, by Lemma \ref{lematecnico}, the first factor converges to $C_ {i,N}=\prod_{j=N}^{\infty}\frac{\alpha_j - z_{i+m_j}}{1 - \alpha_jz_{i+m_j}}$ as $k$ goes to infinity. Since both the second and the last terms converge to 1, we get for $\eta \in \beta(\N) \setminus \N$ that
		\[ \Phi_z^\eta(G_{i,N}) = C_{i,N}\cdot w_i. \]
Note that $|C_{i,N}|\le 1$ for all $N$ and $C_{i,N}\to 1$ when $N\to\infty$.
		 Thus, for each $h\in\mathcal H^\infty (\D)$ with $\|h\|\le 1$ we have that $h\circ G_{i,N}\in\mathcal H^\infty(B_{c_0})$ with $\|h\circ G_{i,N}\|\le 1$ and
\[
|\Phi_z^\eta(w)(h\circ G_{i,N})-\Phi_z^\eta(v)(h\circ G_{i,N})|=|h(C_{i,N}w_i)-h(C_{i,N}v_i)|\underset{N\to \infty}{\longrightarrow} |h(w_i)-h(v_i)|.
\]
This implies $\|\Phi_z^\eta(w)-\Phi_z^\eta(v)\|\ge \| \delta_{w_i}-\delta_{v_i}\|$ for all $i$, where the last norm is taken in $\M_\infty(\mathbb{D})$. Appealing once more to \cite[Th. 2.4]{AronDimantLassalleMaestre} or \cite[(6.1)]{ColeGamelinJohnson} we derive
\[
\|\Phi_z^\eta(w)-\Phi_z^\eta(v)\|\ge \| \delta_w-\delta_v\|\quad\textrm{for all }w,v\in B_{\ell_\infty},
\]	and hence the isometry is proved.

Finally, if $\eta_1, \eta_2$ are in $\beta(\N) \setminus \N$, with $\eta_1 \not = \eta_2$, then there exists an infinite set $A \subset \N$ such that $\N \setminus A$ is also infinite and $\eta_1 \in \overline A$, while $\eta_2 \in \overline{\N \setminus A}$. We now consider the function 
\begin{align*}
     G_{1,N}^A(\omega) = \prod_{\substack{j \geq N \\ j \in A}} \frac{\alpha_j - \omega_{1+m_j}}{1 - \alpha_j\omega_{1+m_j}}.
\end{align*}

Reasoning as above we can see for any $k \in A$, that $G_{1,N}^A(\Phi_z^k(0)) = 0$, so that $\Phi_z^{\eta_1}(0)(G_{1,N}^A) = 0.$ On the other hand, for $k'\geq N$ in $\N \setminus A$ we can compute 
\begin{align*}
    G_{1,N}^A(\Phi_z^{k'}(0)) = \prod_{\substack{N \leq j \leq \ell(k') \\ j \in A}} \frac{\alpha_j - \alpha_{k'}z_{1+m_j}}{1 - \alpha_j\alpha_{k'}z_{1+m_j}} \prod_{\substack{j \geq \ell(k')+1 \\ j \in A}} \alpha_j.
\end{align*}
And since the second factor converges to one, we obtain
\begin{align*}
    \Phi_z^{\eta_2}(0)(G_{1,N}^A) = \prod_{\substack{j \geq N \\ j \in A}} \frac{\alpha_j - z_{1+m_j}}{1 - \alpha_jz_{1+m_j}}.
\end{align*}
We then know by Lemma \ref{lematecnico} that $\Phi_z^{\eta_2}(0)(G_{1,N}^A) \to 1$ as $N \to \infty$, from which we conclude that $\Phi_z^{\eta_1}(0)$ and $\Phi_z^{\eta_2}(0)$ lie in different Gleason parts. The claim is finally derived from the fact that, by being a Gleason isometry, $\Phi_z^{\eta_i}(B_{\ell_\infty})$ is contained in a single Gleason part for $i = 1,2$.
\medskip
	
Step 2: Let us now consider $z \in \overline{B}_{\ell_\infty}$ for which there is $\delta > 0$ and a  subsequence $(z_{n_j})$ satisfying $|z_{n_j} - 1| > \delta$. We thus take $\mathbb{J} = \{n_j : j \in \mathbb{N} \}$ and perform the same construction as in the previous step restricting to those coordinates in $\mathbb{J}$. That is, for each $k \in \mathbb{N}$ we define  $\Phi_{z,\mathbb{J}}^k$ as
		\begin{align}
		\label{Phikajota}
		(\Phi_{z,\mathbb{J}}^k(w))_n = \left\{ \begin{array}{cc}
		\alpha_k z_{n}, & \text{ for } n \not= n_j\text{ for all } j,\ n \leq k, \\
               & \\
		\alpha_k z_{n_j}, & \text{ for } n = n_j, 1 \leq j \leq k,\\
               & \\
		\dfrac{\alpha_k - w_i}{1 - \alpha_kw_i},& \text{ for } n=n_j, \text{ with } j = m_k + i, 1 \leq i \leq k,\\
               & \\
		0,& \text{ otherwise. }
		\end{array}
		\right.
		\end{align}
	As before, we obtain  $\Phi_{z,\mathbb{J}}: \beta(\N) \times B_{\ell_\infty} \to \M_\infty(B_{c_0})$ by taking for each $w \in B_{\ell_\infty}$ the unique extension to $\beta(\N)$ of the mapping $\Phi_{z,\mathbb{J}}(-,w)$. The result is then achieved by considering the functions
		\[ G_{i,N}(\omega) = \prod_{j=N}^{\infty} \frac{\alpha_j - \omega_{n_{i+m_j}}}{1 - \alpha_j \omega_{n_{{i+m_j}}}}.\]
	Proceeding as in Step 1 we deduce, for each $\eta\in\beta(\N)\setminus \N$, that $\Phi_{z,\mathbb{J}}^\eta$ is an analytic Gleason isometry projecting over $z$. Furthermore, by considering the corresponding $G_{1,N}^A$ for this case it is readily seen that $\Phi_{z,\mathbb{J}}^{\eta_1}(B_{\ell_\infty})$ and $\Phi_{z,\mathbb{J}}^{\eta_2}(B_{\ell_\infty})$ lie in different Gleason parts whenever $\eta_1 \not = \eta_2$.

\medskip
	
Step 3: If $z \in \overline{B}_{\ell_\infty}$ does not satisfy the conditions of Step 2, then $z_k \to 1$ as $k \to \infty$. Fix $\lambda \in \mathbb{T}$, $\lambda \not = 1$. Then, we can apply the procedure of Step 1 to $\lambda z$ to get a mapping $\Phi_{\lambda z}:\beta(\N)\setminus \N\times B_{\ell_\infty}\to \pi^{-1}(\lambda z)$ with all the desired properties. Now, through Remark \ref{equalmodulus} the result follows.
 	\end{proof}	
 	
 \begin{remark}
 \label{gleasondeltaz}
 For $z \in B_{\ell_\infty}$, it is worth noting that the image  $\Phi_z^\eta(B_{\ell_\infty})$ is disjoint from $\mathcal{GP}(\delta_z)$. Indeed, we know from the proof of Theorem \ref{ExtCGJ} that $\Phi_z^\eta(0)(G_{1,N})= 0$, while $\delta_z (G_{i,N}) \to 1$ as $N \to \infty$, so that $\rho(\Phi_z^\eta(0),\delta_z) = 1$. The remark then yields from the fact that $\Psi_z^\eta$ is a Gleason isometry.
 \end{remark}

\section{Fibers for the vector-valued spectrum $\mathcal{M}_\infty(B_{c_0},B_{c_0})$} \label{caso-vectorial}

The vector-valued spectrum $\mathcal{M}_\infty(B_{c_0},B_{c_0})$ is projected onto $\overline B_{\H^\infty(B_{c_0}, \ell_\infty)}$ through the mapping $\xi$ given by $\xi(\Phi)(x)(x^*)=\Phi(x^*)(x)$ for all $x\in B_{c_0},\ x^*\in\ell_1$. Our aim is to prove that there are big analytic sets in the fibers given by this projection.
In \cite[Prop. 4.2, Th. 4.3,  Th. 4.6]{DimantSinger} we have shown different situations where $B_{\H^\infty(B_Y)}$ is analytically injected into the fiber $\F(g)\subset \M_\infty(B_X,B_Y)$ for \textit{constant} functions $g$. Also, in \cite[Th. 4.5]{DimantSinger} we have seen that, if there exists a polynomial on $X$ not weakly continuous on bounded sets, then the complex disk $\D$ is analytically inserted in $\F(g)\subset \M_\infty(B_X,B_Y)$ for every $g\in B_{\H^\infty(B_Y, X^{**})}$. But $c_0$ is the typical example of space where all the polynomials are weakly continuous on bounded sets. Nevertheless, we will see in Theorem \ref{bola-en-fibra-vectorial} that the ball $B_{\H^\infty(B_{c_0},\ell_\infty)}$ can be analytically injected in the fiber $\F(g)\subset \M_\infty(B_{c_0},B_{c_0})$ for \textit{every} $g\in \overline B_{\H^\infty(B_{c_0}, \ell_\infty)}$. Moreover, as in the scalar-valued case, we produce $2^c$ analytic Gleason isometric copies of $B_{\H^\infty(B_{c_0},\ell_\infty)}$  in the fiber $\F(g)$ with each of these copies lying in different Gleason parts. 
	
As we previously did in \cite{DimantSinger}, the vector-valued result will be performed by building on the mappings obtained in the scalar-valued case.  Note that  $\Phi\in\F(g)$ if and only if $\delta_x\circ\Phi\in\pi^{-1}(g(x))$ for all $x\in B_{c_0}$. Hence in order to have, for each $\eta\in\beta(\N)\setminus \N$, an analytic mapping $\Psi^\eta$ from $B_{\H^\infty(B_{c_0},\ell_\infty)}$  into the fiber $\F(g)$ it seems natural to propose
\begin{equation} \label{Psi-propuesta}
    \Psi^\eta(h)(f)(x)=\Phi_{g(x)}^\eta(h(x))(f).
\end{equation}
The problem is that the construction of $\Phi^\eta_z$ in Theorem \ref{ExtCGJ} is dependent on $z$ (specifically, whether $z$ has a subsequence whose coordinates are far away from 1 and which of those coordinates are used). Thus, to make formula \eqref{Psi-propuesta} work we need all $z=g(x)$ to be of the \textit{same} kind, independently of $x\in B_{c_0}$. In other words, we need (perhaps after a rotation) a subsequence $(g(x))_{n_k}$, independent of $x$ with all of its elements at a positive distance from 1. Before proving the existance of such subsequence, let us first recall some information about the functions $g$ in $\overline{B}_{\mathcal{H}^\infty(B_{c_0},\ell_\infty)}$.
	
\begin{remark}\label{Dos-opciones}
Let $g \in \overline{B}_{\mathcal{H}^\infty(B_{c_0},\ell_\infty)}$. If there exists $x_0\in B_{c_0}$ such that $\|g(x_0)\|=1$ then $\|g(x)\|=1$ for all $x\in B_{c_0}$. Hence,   there are two alternatives for  the range of $g$:
		\begin{itemize}
			\item $g(B_{c_0}) \subset B_{\ell_\infty}$.
			\item $g(B_{c_0}) \subset S_{\ell_\infty}$.
		\end{itemize}
	\end{remark}

 A function  $g\in\overline{B}_{\mathcal{H}^\infty(B_{c_0},\ell_\infty)}$ can be viewed as a sequence of functions $g= (g_n)_n$ with each $g_n\in\overline{B}_{\mathcal{H}^\infty(B_{c_0})}$. Let us see now that the expected behaviour of $g$  actually holds.

\begin{lemma}	\label{Bingo}
Let $g \in \overline{B}_{\mathcal{H}^\infty(B_{c_0},\ell_\infty)}$, $g= (g_n)_n$. Then at least one of the following occur:
	\begin{enumerate}
		\item[(i)] There exists a subsequence $(g_{n_k})$ such that $g' = (g_{n_k})_k$ satisfies $g'(B_{c_0}) \subset B_{\ell_\infty}$.
			\item[(ii)] There exist $\lambda \in \mathbb{T}$ and a subsequence $(g_{n_k})$ such that   $g_{n_k}(x) \to \lambda$ for all $x \in B_{c_0}$.
		\end{enumerate}
	\end{lemma}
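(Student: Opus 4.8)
The plan is to argue by dichotomy on the sequence of sup-norms $(\|g_n\|_\infty)_n$. If these do not approach $1$, then there is some $\rho<1$ and a subsequence $(g_{n_k})$ with $\|g_{n_k}\|_\infty \le \rho$ for all $k$; setting $g' = (g_{n_k})_k$, every coordinate of $g'(x)$ has modulus at most $\rho$, so $g'(B_{c_0})\subset \rho\,\overline B_{\ell_\infty}\subset B_{\ell_\infty}$, and alternative (i) holds. Hence we may assume $\|g_n\|_\infty \to 1$, and we must produce alternative (ii): a scalar $\lambda\in\mathbb T$ and a subsequence $(g_{n_k})$ with $g_{n_k}(x)\to\lambda$ for every $x\in B_{c_0}$.

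The key point in this second case is a normal-families/maximum-modulus observation: for a single $g_n\in\overline B_{\mathcal H^\infty(B_{c_0})}$ with $\|g_n\|_\infty$ close to $1$, the value $g_n(0)$ need not be close to the unit circle, but we \emph{can} extract a lot of information on a subsequence. First I would note that $(g_n)$ is a bounded sequence in $\mathcal H^\infty(B_{c_0})$, so by a normal families argument (on the separable domain $B_{c_0}$, using that the $g_n$ are uniformly bounded and locally equicontinuous on the balls $rB_{c_0}$) one can pass to a subsequence along which $g_n$ converges uniformly on each $rB_{c_0}$, $0<r<1$, to some limit $h\in\overline B_{\mathcal H^\infty(B_{c_0})}$. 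The catch is that pointwise-on-$rB_{c_0}$ limits of the coordinate functions need not force $\|g_n(x)\|\to 1$ for a \emph{fixed} $x$ — the modulus could "leak" out along the index. To handle this correctly I would instead work with the canonical extensions $\widetilde{g_n}\in\mathcal H^\infty(B_{\ell_\infty})$ and evaluate at a point like $x_0=(1-\eps)\sum_n e_n/2^n$ or more simply exploit Remark \ref{Dos-opciones}: either $g(B_{c_0})\subset B_{\ell_\infty}$ — in which case \emph{every} coordinate subsequence again lands in $B_{\ell_\infty}$ and (i) holds trivially — or $g(B_{c_0})\subset S_{\ell_\infty}$. So the genuinely delicate case is $g(B_{c_0})\subset S_{\ell_\infty}$ together with $\|g_n\|_\infty\to 1$, and it is precisely here that (ii) should be forced.

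In that remaining case the heuristic is: if no coordinate subsequence of $g$ maps into $B_{\ell_\infty}$, then for the sup of $|g_n(x)|$ to be attained (or approached) uniformly over $x$, infinitely many $g_n$ must themselves be uniformly close to $1$ in modulus. Concretely, I would show that $\liminf_n \inf_{x\in B_{c_0}} |g_n(x)| = 1$ along a subsequence: if not, for each subsequence there is $\eps>0$ and points $x_k$ with $|g_{n_k}(x_k)|<1-\eps$, and by a Montel-type extraction one manufactures a subsequence $g'$ with $\sup_k\|g'_k\|_{\text{on some }rB_{c_0}}<1$, contradicting "no subsequence in $B_{\ell_\infty}$" after checking this survives passage to the bidual. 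Once a subsequence $(g_{n_k})$ has $|g_{n_k}|$ uniformly close to $1$ on all of $B_{c_0}$, each such $g_{n_k}$ is within $\eps$ of the unit circle everywhere on a connected set, hence (being holomorphic, by the maximum/open mapping principle applied to $1-|g_{n_k}|$, or by writing $g_{n_k}=c_k(1+\text{small})$) is uniformly close to a \emph{constant} unimodular $c_k$; passing to a further subsequence with $c_k\to\lambda\in\mathbb T$ gives $g_{n_k}(x)\to\lambda$ uniformly in $x$, which is (ii).

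The main obstacle I expect is the "no leakage" issue flagged above: controlling that $\|g(x)\|$ being identically $1$ really does localise, for infinitely many indices, into individual coordinates $g_n$ being uniformly near-unimodular — rather than the modulus being spread thinly so that each single $g_n$ stays away from the circle while the supremum is still $1$. Making that step rigorous is where one must combine the Montel/normal-families extraction with the maximum modulus principle for holomorphic functions on $B_{c_0}$ and a careful diagonal argument over the exhausting radii $r\nearrow 1$, and possibly pass through the bidual $B_{\ell_\infty}$ where the sup is attained; everything else (the $\rho<1$ case, and deducing convergence to a constant from near-unimodularity) is routine.
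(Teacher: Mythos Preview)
Your proposal has the right ingredients---a normal-families extraction and the maximum modulus principle---but the step where you claim a subsequence with $\inf_{x\in B_{c_0}}|g_{n_k}(x)|\to 1$ (individual coordinates becoming uniformly near-unimodular) is false. Take $a_n\nearrow 1$, $0<a_n<1$, and set $g_n(x)=\dfrac{a_n-x_n}{1-a_nx_n}$. For every subsequence and every $x\in B_{c_0}$ one has $g_{n_k}(x)\to 1$, so (i) fails for this $g$; yet $g_n(a_ne_n)=0$, so $\inf_{x\in B_{c_0}}|g_n(x)|=0$ for every $n$, and no subsequence has the near-unimodularity you want. The ``leakage'' you worry about genuinely occurs at the level of single coordinates, and no uniform-in-$x$ argument of the type you sketch will avoid it.

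The fix---which is exactly the paper's argument---is to stop trying to control each $g_{n_k}$ and work only with the \emph{limit} $h$. Once normal families gives a subsequence with $g_{n_k}(x)\to h(x)$ for all $x\in B_{c_0}$, assume (i) fails. For every $k_0$ the tail $(g_{n_k})_{k\ge k_0}$ is itself a subsequence of $(g_n)$, so by Remark~\ref{Dos-opciones} one has $\sup_{k\ge k_0}|g_{n_k}(x)|=1$ for all $x$; since $|g_{n_k}(x)|\to|h(x)|$, this forces $|h(x)|=1$ for every $x$. Then $h\equiv\lambda\in\mathbb T$ by the maximum modulus principle, and (ii) holds. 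This makes your preliminary dichotomy on $\|g_n\|_\infty$ unnecessary: the whole proof is these three lines.
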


	\begin{proof} Applying \cite[Th. 2.17]{DirSer} for the sequence $(g_n)$ we know that there  exist a subsequence $(g_{n_k})_k$ and a function $h\in \overline{B}_{\mathcal{H}^\infty(B_{c_0})}$  such that $g_{n_k}(x)\to h(x)$, for all $x\in B_{c_0}$. If $g$ does not satisfy $(i)$, taking into account what we comment in Remark \ref{Dos-opciones}, we realize that
		$$
		\sup_{k\ge k_0} |g_{n_k}(x)|=1\ \textrm{ for all } x\in B_{c_0},\textrm{ and } k_0\in\mathbb{N}.
		$$
		This implies that there is $ \lambda \in \mathbb{T}$ such that $h(x)=\lambda$, for all $x$ and thus $ (g_{n_k})_k$ satisfies $(ii)$.
	\end{proof}

We also need to have an isometry between certain fibers of the vector-valued spectrum, in the spirit of what we stated in Remark \ref{equalmodulus} for the scalar-valued case.

	\begin{lemma} \label{MismoModulo}
		Let $g,h \in \overline B_{\mathcal{H}^\infty(B_{c_0}, \ell_\infty)}$ such that $|g_n(x)| = |h_n(x)|$, for all $  n \in \mathbb{N}$ and  $ x \in B_{c_0}$. Then:
\begin{enumerate}
			\item[(i)] There exists a sequence $(\lambda_n)$ in $\mathbb T$ such that $h_n(x) = \lambda_n g_n(x)$ for all $  n \in \mathbb{N}$ and $x \in B_{c_0}$.
			\item[(ii)]  The fibers $\mathscr{F}(g)$ and $\mathscr{F}(h)$ are Gleason isometric.
		\end{enumerate}
	\end{lemma}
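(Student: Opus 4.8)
The plan is to prove $(i)$ by slicing $B_{c_0}$ with complex lines and invoking the classical one--variable rigidity fact --- two holomorphic functions on a domain with the same modulus differ by a unimodular constant --- and then to deduce $(ii)$ from $(i)$ by transporting homomorphisms through the induced diagonal change of variables, exactly the way Remark \ref{equalmodulus} handles the scalar case, only one level higher.

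For $(i)$, I would fix $n$ and dispose first of the trivial case $g_n\equiv 0$: then $|h_n|\equiv 0$, so $h_n\equiv 0$ and $\lambda_n=1$ works. Otherwise I would pick $a\in B_{c_0}$ with $g_n(a)\neq 0$ and, for an arbitrary $b\in B_{c_0}$, restrict $g_n$ and $h_n$ to the complex line $\gamma(t)=a+t(b-a)$. Convexity and openness of $B_{c_0}$ guarantee that $U=\{t\in\mathbb{C}:\gamma(t)\in B_{c_0}\}$ is an open convex --- hence connected --- subset of $\mathbb{C}$ containing $0$ and $1$, and on $U$ the functions $g_n\circ\gamma$ and $h_n\circ\gamma$ are holomorphic with equal modulus, with $g_n\circ\gamma$ not identically zero. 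Equality of moduli forces the two restrictions to vanish at the same points with the same multiplicities, so the quotient $(h_n\circ\gamma)/(g_n\circ\gamma)$ extends to a zero--free holomorphic function on $U$ of constant modulus $1$, hence (open mapping theorem, $U$ connected) is a constant $\lambda$. Evaluating at $t=0$ shows $\lambda=h_n(a)/g_n(a)\in\mathbb{T}$ is independent of $b$; evaluating at $t=1$ gives $h_n(b)=\lambda\,g_n(b)$. As $b$ was arbitrary, putting $\lambda_n=\lambda$ yields $h_n=\lambda_n g_n$ on $B_{c_0}$.

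For $(ii)$, with $(\lambda_n)$ produced by $(i)$ I would introduce the diagonal map $\theta_0\colon c_0\to c_0$, $\theta_0(x)=(\lambda_n x_n)_n$, which is a surjective linear isometry with $\theta_0(B_{c_0})=B_{c_0}$, so that $C_\theta\colon\H^\infty(B_{c_0})\to\H^\infty(B_{c_0})$, $C_\theta f=f\circ\theta_0$, is an isometric algebra automorphism with inverse $C_{\theta_0^{-1}}$. Then I would set $\Theta(\Phi)=\Phi\circ C_\theta$ for $\Phi\in\mathscr{F}(g)$. Being a composition of nonzero algebra homomorphisms it lies in $\M_\infty(B_{c_0},B_{c_0})$, and since $e_n^*\circ\theta_0=\lambda_n e_n^*$ one computes $\Theta(\Phi)(e_n^*)(x)=\lambda_n\Phi(e_n^*)(x)=\lambda_n g_n(x)=h_n(x)$ for all $n$ and $x$, i.e.\ $\xi(\Theta(\Phi))=h$; thus $\Theta$ maps $\mathscr{F}(g)$ into $\mathscr{F}(h)$. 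The analogous construction with $\theta_0^{-1}$ (using $g_n=\overline{\lambda_n}h_n$) provides a two--sided inverse, so $\Theta$ is a bijection $\mathscr{F}(g)\to\mathscr{F}(h)$; and because precomposition with the surjective isometry $C_\theta$ leaves operator norms unchanged, $\|\Theta(\Phi)-\Theta(\Phi')\|=\|(\Phi-\Phi')\circ C_\theta\|=\|\Phi-\Phi'\|$ for all $\Phi,\Phi'\in\mathscr{F}(g)$, so $\Theta$ is a Gleason isometry.

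I expect the only genuine --- though still mild --- obstacle to lie in $(i)$: one must make the passage to a single complex variable carefully, using convexity of $B_{c_0}$ to ensure the slice domains $U$ are connected and contain both $0$ and $1$, and one must note that equality of moduli matches the zeros of the two restrictions together with their multiplicities, so that the quotient really does extend holomorphically across them. Part $(ii)$ is then essentially bookkeeping, the single point to keep straight being the identity $e_n^*\circ\theta_0=\lambda_n e_n^*$, which is precisely what makes the fibers match.
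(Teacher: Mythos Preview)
Your proof is correct and, for part $(ii)$, essentially identical to the paper's: both build the diagonal isometry $\theta_0(x)=(\lambda_n x_n)_n$ and transport homomorphisms by precomposition with $C_{\theta_0}$; the paper simply cites \cite[Prop.~5.3]{DimantSinger} rather than writing out the fiber and norm computations you give.

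For part $(i)$ there is a mild difference of route. The paper works directly in infinite dimensions: on the open set $V_n=\{g_n\neq 0\}$ the quotient $h_n/g_n$ is holomorphic with constant modulus $1$, hence constant by the open mapping theorem (applied to a $\mathbb{C}$-valued holomorphic map on a Banach ball), and then the identity principle on the connected set $B_{c_0}$ propagates $h_n=\lambda_n g_n$ everywhere. You instead slice $B_{c_0}$ by complex lines through a fixed base point $a$ with $g_n(a)\neq 0$, reduce to the classical one-variable fact, and observe that the unimodular constant on each slice equals $h_n(a)/g_n(a)$ and is therefore independent of the second endpoint. Your argument is a bit longer --- you must handle the common zeros of the restrictions to make the quotient extend --- but it has the virtue of being entirely elementary, invoking only single-variable complex analysis. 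The paper's approach avoids the zero-bookkeeping by restricting to $V_n$ from the start, at the cost of quoting the open mapping theorem in the Banach-space setting (which, of course, one typically proves by exactly the kind of slicing you perform).
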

	
	\begin{proof} $(i)$ For a given $n$, if $g_n \equiv 0$ the result is trivial. If not,  we can find an open subset $V_n$ of $B_{c_0}$ such that $g_n \not = 0$ on $V_n$. This implies that $\frac{h_n}{g_n}$ is a holomorphic function in $V_n$ having constant modulus $1$. This in turn implies that this function is constant and so $(i)$ holds.

$(ii)$ Given numbers $(\lambda_n)$ in $\mathbb T$, let $\theta: B_{c_0} \to B_{c_0}$ be the mapping defined by $\theta(x_n) = (\lambda_n x_n)$. A similar argument as the one used in Remark \ref{equalmodulus}, this time using  \cite[Prop. 5.3]{DimantSinger} instead of \cite[Prop. 1.6]{AronDimantLassalleMaestre} yields the desired result.
	\end{proof}
	
Now, we have all the necessary ingredients to proceed with the promised result.
	
\begin{theorem} \label{bola-en-fibra-vectorial}
	For every $g \in \overline{B}_{\mathcal{H}^\infty(B_{c_0},\ell_\infty)}$, there is a map 
	\begin{align*}\Psi_g: \beta(\N)\setminus \N \times B_{\mathcal{H}^\infty(B_{c_0}, \ell_\infty)} \to \mathscr{F}(g),
	\end{align*}
	satisfying 
	\begin{enumerate}
	    \item For all $\eta \in \beta(\N) \setminus \N$, the mapping $\Psi_g^\eta:B_{\mathcal{H}^\infty(B_{c_0}, \ell_\infty)} \to \mathscr{F}(g)$, given by $\Psi_g^\eta(h) = \Psi_g(\eta,h).$ is an analytic Gleason isometry.
        \item If $\eta_1 \not = \eta_2$ are in $\beta(\N) \setminus \N$, then the images $\Psi_g^{\eta_1}(B_{\mathcal{H}^\infty(B_{c_0}, \ell_\infty)})$ and $\Psi_g^{\eta_2}(B_{\mathcal{H}^\infty(B_{c_0}, \ell_\infty)})$ lie in different Gleason parts.
	\end{enumerate}
	\end{theorem}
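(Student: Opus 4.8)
The plan is to reduce everything to the scalar-valued Theorem \ref{ExtCGJ} by a fiberwise construction, using Lemma \ref{Bingo} to make the scalar construction uniform in $x$ and Lemma \ref{MismoModulo} to remove an auxiliary rotation at the end. First I would dispose of the case in which $g$ satisfies alternative (ii) of Lemma \ref{Bingo} together with a rotation: if every $g_n(x)\to 1$ uniformly (or, after a rotation by some $\lambda\in\T$, $\lambda\neq 1$, the rotated function has a coordinate subsequence bounded away from $1$), replace $g$ by $\lambda g$, build the map for $\lambda g$, and then transport it back via the Gleason isometry of Lemma \ref{MismoModulo}(ii). So the core case is: there exists $\delta>0$ and a subsequence of indices $(n_k)$ with $|g_{n_k}(x)-1|>\delta$ for \emph{all} $x\in B_{c_0}$ and all $k$ — this is exactly what Lemma \ref{Bingo}(i) (combined with Remark \ref{Dos-opciones}) guarantees we may assume after the rotation reduction, since $g'(B_{c_0})\subset B_{\ell_\infty}$ forces, along a further subsequence by Lemma \ref{Bingo} applied again, coordinates staying off a neighbourhood of $1$.

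\textbf{The main construction.} Fix such a subsequence $\mathbb J=\{n_k\}$. For each $x\in B_{c_0}$ the point $z=g(x)$ falls under Step 2 of the proof of Theorem \ref{ExtCGJ}, using the \emph{same} index set $\mathbb J$ for every $x$; write $\Phi^k_{g(x),\mathbb J}$ for the resulting maps $B_{\ell_\infty}\to B_{\ell_\infty}\hookrightarrow\M_\infty(B_{c_0})$, as in \eqref{Phikajota}. For $h\in B_{\H^\infty(B_{c_0},\ell_\infty)}$ and $k\in\N$ define $\Psi^k_g(h)\in\M_\infty(B_{c_0},B_{c_0})$ by
\[
\Psi^k_g(h)(f)(x)=\bigl(\Phi^k_{g(x),\mathbb J}(h(x))\bigr)(f),\qquad f\in\H^\infty(B_{c_0}),\ x\in B_{c_0}.
\]
One must check this is a well-defined algebra homomorphism $\H^\infty(B_{c_0})\to\H^\infty(B_{c_0})$: for fixed $f$, the map $x\mapsto \Phi^k_{g(x),\mathbb J}(h(x))$ is, coordinatewise, a finite expression in $g(x)$ and $h(x)$ (the explicit formula \eqref{Phikajota} is polynomial/Möbius in those finitely many coordinates), hence lands in $B_{c_0}$ and depends holomorphically on $x$; composing with $\widetilde f$ gives a bounded holomorphic function on $B_{c_0}$. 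Homomorphism and nonvanishing properties are pointwise-in-$x$ consequences of the corresponding properties of $\Phi^k_{g(x),\mathbb J}$. Then, exactly as in Theorem \ref{ExtCGJ}, since $\M_\infty(B_{c_0},B_{c_0})$ sits inside a dual ball and $f\circ\Psi^k_g$ ranges in a fixed weak-star compact set, take the (unique) weak-star continuous extension $\Psi_g(-,h)\colon\beta(\N)\to\M_\infty(B_{c_0},B_{c_0})$ and set $\Psi^\eta_g(h)=\Psi_g(\eta,h)$. Analyticity of $h\mapsto\Psi^\eta_g(h)$ follows from the standard weak-star compactness argument applied to the net $(f\circ\Psi^k_g)_k$, together with analyticity of each $h\mapsto\Psi^k_g(h)(f)$, which in turn reduces to analyticity of $h(x)\mapsto\Phi^k_{g(x),\mathbb J}(h(x))$ established in Step 1.

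\textbf{Isometry and separation of Gleason parts.} For the Gleason metric one uses $\sigma(\Phi,\Psi)=\sup_{x\in B_{c_0}}\rho(\delta_x\circ\Phi,\delta_x\circ\Psi)$. The inequality $\|\Psi^\eta_g(h)-\Psi^\eta_g(h')\|\le\|h-h'\|$ follows because, fiberwise, $\delta_x\circ\Psi^\eta_g(h)=\Phi^\eta_{g(x),\mathbb J}(h(x))$ and the scalar estimate of Theorem \ref{ExtCGJ} gives $\rho\bigl(\Phi^\eta_{g(x),\mathbb J}(h(x)),\Phi^\eta_{g(x),\mathbb J}(h'(x))\bigr)\le\rho(\delta_{h(x)},\delta_{h'(x)})\le\|h-h'\|$ uniformly in $x$; then one passes from the pseudohyperbolic/$\sigma$ estimate to the norm estimate via \cite[Prop. 5.3]{DimantSinger} or the argument of \cite{DimantSinger} relating $\sigma$ and $\|\cdot\|$. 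For the reverse inequality, mimic Step 1: the functions $G_{i,N}(\omega)=\prod_{j\ge N}\frac{\alpha_j-\omega_{n_{i+m_j}}}{1-\alpha_j\omega_{n_{i+m_j}}}$ lie in $\overline B_{\H^\infty(B_{c_0})}$ and satisfy $\Psi^\eta_g(h)(G_{i,N})(x)=\Phi^\eta_{g(x),\mathbb J}(h(x))(G_{i,N})=C_{i,N}(x)\,h(x)_i$, where now $C_{i,N}(x)=\prod_{j\ge N}\frac{\alpha_j-g(x)_{n_{i+m_j}}}{1-\alpha_j g(x)_{n_{i+m_j}}}$ satisfies $|C_{i,N}(x)|\le1$ and $C_{i,N}\to 1$ uniformly on each $rB_{c_0}$ by the uniform-convergence estimate from Step 1 (the factor bound $\tfrac{2}{\delta/2}(1-\alpha_j)$ is independent of $x$). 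Composing with $h\mapsto h\circ G_{i,N}$ — here one must be slightly careful: the relevant test functions in $\H^\infty(B_{c_0})$ are of the form $F\circ(G_{1,N},G_{2,N},\dots,G_{i,N},0,\dots)$ with $F$ a norm-one function that lets us recover the $i$-th slot — and letting $N\to\infty$ recovers $\|h-h'\|$ in the limit, giving $\|\Psi^\eta_g(h)-\Psi^\eta_g(h')\|\ge\|h-h'\|$; this is where invoking \cite[Th. 2.4]{AronDimantLassalleMaestre} in the vector-valued form, or its proof, is needed. Finally, for (2), given $\eta_1\neq\eta_2$ choose $A\subset\N$ as in Step 1 with $\eta_1\in\overline A$, $\eta_2\in\overline{\N\setminus A}$, form $G^A_{1,N}(\omega)=\prod_{j\ge N,\ j\in A}\frac{\alpha_j-\omega_{n_{1+m_j}}}{1-\alpha_j\omega_{n_{1+m_j}}}$, and evaluate at $h\equiv 0$: $\Psi^{\eta_1}_g(0)(G^A_{1,N})(x)=0$ for every $x$ while $\Psi^{\eta_2}_g(0)(G^A_{1,N})(x)=\prod_{j\ge N,\,j\in A}\frac{\alpha_j-g(x)_{n_{1+m_j}}}{1-\alpha_j g(x)_{n_{1+m_j}}}$, whose modulus tends to $1$ uniformly in $x$ as $N\to\infty$ by Lemma \ref{lematecnico} (again the bounds are $x$-uniform because $|g(x)_{n_k}-1|>\delta$ uniformly). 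Hence $\sigma\bigl(\Psi^{\eta_1}_g(0),\Psi^{\eta_2}_g(0)\bigr)=1$, so they lie in different Gleason parts, and since each $\Psi^{\eta_i}_g$ is a Gleason isometry the whole image $\Psi^{\eta_i}_g(B_{\H^\infty(B_{c_0},\ell_\infty)})$ lies in a single Gleason part.

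\textbf{Main obstacle.} The delicate point is not the algebra — that is a fiberwise copy of the scalar case — but verifying that every estimate from the proof of Theorem \ref{ExtCGJ} (convergence of the products $C_{i,N}$, the factor bounds, Lemma \ref{lematecnico}) holds \emph{uniformly in} $x\in B_{c_0}$, which is exactly why we went to the trouble of extracting, via Lemma \ref{Bingo}, a \emph{single} index subsequence $\mathbb J$ and a \emph{single} $\delta$ working for all $x$ simultaneously; once that uniformity is in place, and once one confirms that $x\mapsto\Phi^k_{g(x),\mathbb J}(h(x))$ genuinely defines an element of $\H^\infty(B_{c_0},B_{c_0})$ (holomorphy in $x$ of the explicit coordinate formulas, and that the resulting map sends $B_{c_0}$ into $B_{c_0}$), the rest is a routine transcription. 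The case reduction in Lemma \ref{Bingo}(ii), handled by the rotation $g\mapsto\lambda g$ plus Lemma \ref{MismoModulo}, is the only other thing to keep track of.
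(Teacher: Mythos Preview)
Your core construction is right and matches the paper exactly: define $\Psi_g^k(h)(f)(x)=\Phi_{g(x),\mathbb J}^k(h(x))(f)$, extend in $k$ to $\beta(\N)$ by weak-star compactness, and use the key identity $\delta_x\circ\Psi_g^\eta(h)=\Phi_{g(x),\mathbb J}^\eta(h(x))$. The case split via Lemma \ref{Bingo} and the rotation via Lemma \ref{MismoModulo} are also the paper's moves.

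Where you diverge is in what you call the ``main obstacle''. Your claim that Lemma \ref{Bingo}(i) yields a \emph{single} $\delta>0$ with $|g_{n_k}(x)-1|>\delta$ for all $x\in B_{c_0}$ and all $k$ is not justified and is in fact false in general: take $g_n(x)=x_1$ for every $n$; then $g(B_{c_0})\subset B_{\ell_\infty}$, yet $|g_n(x)-1|=|x_1-1|$ is not bounded away from $0$ on $B_{c_0}$. Reapplying Lemma \ref{Bingo} to $g'$ does not rescue this, since case (ii) with $|\lambda|=1$ is only forced when case (i) fails. So the route you propose for the reverse isometry inequality---reproving the $G_{i,N}$ estimates with $x$-uniform bounds on $C_{i,N}(x)$---does not go through as written.

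The good news is that this uniformity is unnecessary, and the paper's argument is shorter. Once you have $\delta_x\circ\Psi_g^\eta(h)=\Phi_{g(x),\mathbb J}^\eta(h(x))$, you simply invoke Theorem \ref{ExtCGJ} \emph{pointwise}: for each fixed $x$ there is \emph{some} $\delta(x)>0$ along $\mathbb J$ (in case (i) because $\|g'(x)\|_\infty<1$; in case (ii) with $\lambda\neq 1$ because $g_{n_k}(x)\to\lambda$), and Theorem \ref{ExtCGJ} then gives the exact equality
\[
\rho\bigl(\delta_x\circ\Psi_g^\eta(h),\,\delta_x\circ\Psi_g^\eta(h')\bigr)=\rho\bigl(\Phi_{g(x),\mathbb J}^\eta(h(x)),\,\Phi_{g(x),\mathbb J}^\eta(h'(x))\bigr)=\rho(\delta_{h(x)},\delta_{h'(x)}).
\]
Taking $\sup_x$ gives the Gleason isometry with no uniform estimate needed. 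Likewise for item (2): by Theorem \ref{ExtCGJ}, for each $x$ the points $\Phi_{g(x),\mathbb J}^{\eta_1}(h_1(x))$ and $\Phi_{g(x),\mathbb J}^{\eta_2}(h_2(x))$ already lie in different Gleason parts of $\M_\infty(B_{c_0})$, so $\rho(\delta_x\circ\Psi_g^{\eta_1}(h_1),\delta_x\circ\Psi_g^{\eta_2}(h_2))=1$ and hence $\sigma=1$. In short: treat Theorem \ref{ExtCGJ} as a black box fibered over $x$, rather than reopening its proof.
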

	
	\begin{proof}
	Let $g \in \overline{B}_{\mathcal{H}^\infty(B_{c_0},\ell_\infty)}$, $g = (g_n)_n$. We first assume that $(g_n)$ has a subsequence $(g_{n_k})$ such that $g' = (g_{n_k})_k$ verifies $g'(B_{c_0}) \subset B_{\ell_\infty}$. We write $ \mathbb{J} = \{n_k\}_k$, take the functions $\Phi^k_{z,\mathbb{J}}(w)$ from \eqref{Phikajota} and define 
	    \begin{align*}
	        \Psi_g:\mathbb{N} \times B_{\mathcal{H}^\infty(B_{c_0}, \ell_\infty)} &\to \M_\infty(B_{c_0},B_{c_0})\\
	        \Psi_g(k,h)(f)(x) &= \Phi_{g(x),\mathbb{J}}^k(h(x))(f).
	    \end{align*}
	For each $k \in \N$, the mapping $[(x,x') \mapsto \Phi_{g(x),\mathbb{J}}^k(h(x'))]$ is separately holomorphic in $B_{c_0} \times B_{c_0}$, and, by Hartogs' Theorem, it is holomorphic in  $B_{c_0} \times B_{c_0}$. By restricting the previous mapping to the diagonal we obtain that $\Psi_g^k(h)(f)=\Psi_g(k,h)(f)$ is holomorphic. A straightforward computation shows that it is also bounded and that $\Psi_g^k$ is well-defined.
	By the weak-star compactness of $\M_\infty(B_{c_0},B_{c_0})$ (see for instance \cite[p. 3]{DimantSinger}), for each fixed $h \in B_{\mathcal{H}^\infty(B_{c_0}, \ell_\infty)}$, the mapping $\Psi_g(-,h):\N \to \M_\infty(B_{c_0},B_{c_0})$ has a unique extension to $\beta(\N)$; this induces
	    \begin{align*}
	        \Psi_g:\beta(\mathbb{N}) \times B_{\mathcal{H}^\infty(B_{c_0}, \ell_\infty)} &\to \M_\infty(B_{c_0},B_{c_0}).
	     \end{align*}
Note that for every $\eta \in \beta(\N) \setminus \N$, and $x \in B_{c_0}$ we have the following equality 
\begin{align}
\label{deltax}
    \delta_x \circ \Psi_g^\eta(h) = \Phi_{g(x),\mathbb{J}}^\eta(h(x)).
\end{align}
Since we know from Theoren \ref{ExtCGJ} that $\pi(\Phi_{g(x),\mathbb{J}}^\eta(h(x))) = g(x)$, the previous equality yields that $\xi(\Psi_g^\eta(h)) =g$ and thus the image of $\Psi_g^\eta$ is contained in $\mathscr{F}(g)$.
It is also readily seen that $\Psi_g^\eta$ is a Gleason isometry, as we have that
	\begin{align*}
		\rho(\delta_x \circ \Psi_g^\eta(h), \delta_x \circ \Psi_g^\eta (h')) = \rho(\Phi_{g(x),\mathbb{J}}^\eta(h(x)), \Phi_{g(x),\mathbb{J}}^\eta(h'(x))) = \rho (\delta_{h(x)}, \delta_{h'(x)}),
	\end{align*}
	where the last equality derives from the fact that, by Theorem \ref{ExtCGJ}, $\Phi_{g(x),\mathbb{J}}^\eta$ is a Gleason isometry. If we now take $\eta_1 \not = \eta_2 \in \beta(\N) \setminus \N$, it follows from \eqref{deltax} paired with Theorem \ref{ExtCGJ} that the images $\Phi_g^{\eta_1}(B_{\mathcal{H}^\infty(B_{c_0}, \ell_\infty)})$ and $\Phi_g^{\eta_2}(B_{\mathcal{H}^\infty(B_{c_0}, \ell_\infty)})$ lie in different Gleason parts.

	Additionally, for any fixed $f \in \H^\infty(B_{c_0})$, $x \in B_{c_0}$ and $\eta \in \beta (\N) \setminus \N$, the equality in \eqref{deltax} shows that the mapping $\delta_x \circ f \circ \Psi_g: B_{\mathcal{H}^\infty(B_{c_0}, \ell_\infty)} \to \mathbb{C}$ can be seen as the composition of the following analytic mappings
	\[\begin{array}{rclrcl}
    B_{\H^\infty(B_{c_0}, \ell_\infty)} &\to & B_{\ell_\infty} &\qquad\qquad B_{\ell_\infty} &\to &\mathbb C\\
    h &\mapsto & h(x) &\qquad\qquad z &\mapsto & \Phi_{g(x),\mathbb{J}}^\eta(z)(f),
    \end{array}
    \]
    showing that $\Psi_g^\eta$ is analytic.

	If instead there is no subsequence such that the corresponding $g'$ satisfies $g'(B_{c_0}) \subseteq B_{\ell_\infty}$, then by Lemma \ref{Bingo}, we can find a subsequence $(g_{n_k})$ and a number $\lambda \in \mathbb{T}$ such that $g_{n_k}(x) \to \lambda$ for every $x \in B_{c_0}$. If $\lambda\not= 1$, for each $x \in B_{c_0}$ there exist $\delta = \delta(x) > 0$ and $k_0=k_0(x)\in\mathbb N$ such that $ |g_{n_k}(x) - 1| > \delta$ for all $k\ge k_0$. Let $\mathbb J  = \{n_k : k \in \N \}$. Taking then
	\begin{align*}
		\Psi_g^k(h)(f)(x) = \Phi_{g(x),\mathbb{J}}^k(h(x))(f),
	\end{align*}
	and proceeding as before we obtain the desired result.
	Finally, the remaining case, that is, $\lambda = 1$, follows from the previous argument combined with Lemma \ref{MismoModulo}.
 	\end{proof}
	
	%\begin{lemma}[\txcr{Mover a partes de Gleason?}]
%		For any given $g \in \overline{B}_{\mathcal{H}^\infty(B_{c_0},\ell_\infty)}$, the image of the mapping
%		\begin{align*}
%		\Psi_g: B_{\mathcal{H}^\infty(B_{c_0},\ell_\infty)} &\to \mathscr{F}(g)\\
%		h &\mapsto \Psi(g,h).
%		\end{align*}
%		Is contained in a single Gleason part.
%	\end{lemma}
%%	
%	\begin{proof}
%		Note that for every $k \in \mathbb{N}$ we have that
%		\begin{align*}
%			\|\Phi_k(z,w) - \Phi(z,w')\| \leq \|\delta_{(z,w)} - \delta_{(z,w')} \|.
%		\end{align*}
%		It follows that $\| \Phi(z,w) - \Phi(z,w')\| \leq \|\delta_{(z,w)} - \delta_{(z,w')} \|.$
%	\end{proof}

	\begin{remark}
		\label{TaylorFalco}
In \cite[Lem. 2.9]{AronFalcoGarciaMaestre} it was shown that, for each $b\in\mathbb D$ and $w\in \overline B_{\ell_\infty}$, the fibers over $w$ and $(b,w)$ are homeomorphic. Then, it is observed in \cite{AronDimantLassalleMaestre} that this homeomorphism is in fact, a Gleason isometry.
		
    In order to present an extension of this result to the vector-valued spectrum, let us recall how this scalar-valued isometry is built. For that, given $b\in\mathbb D$, let us denote by $\Lambda_b:B_{c_0}\to B_{c_0}$ the mapping given by $\Lambda_b(x)=(b,x)$. Also, we denote by $S$ the left shift operator from $c_0$ onto $c_0$ (that is, $Sx=(x_2,x_3,\dots)$).

    The construction is the following: any $\varphi$ in the fiber over $w$ is mapped to $\psi$ in the fiber over $(b,w)$, where $\psi(f)=\varphi(f\circ \Lambda_b)$ for every $f\in\mathcal H^\infty(B_{c_0})$. The main step is then to show that if $\psi$ belongs to the fiber over $(b,w)$ then, for all $f$,  $\psi(f)=\psi(f\circ\Lambda_b\circ S)$. We make use of this equality to derive the vector-valued version of the statement:
	\end{remark}
	
	\begin{lemma} \label{mapping R}
		Let $g_1 \in \mathcal{H}^\infty(B_{c_0})$ such that $g_1(B_{c_0}) \subset \mathbb{D}$. Then for every $h \in \overline{B}_{\mathcal{H}^\infty(B_{c_0},\ell_\infty)}$ we have that $\mathscr{F}(h)$ is Gleason isometric to $\mathscr{F} (g_1,h)$.
	\end{lemma}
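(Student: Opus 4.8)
The plan is to adapt the scalar-valued isometry recalled in Remark~\ref{TaylorFalco} to the vector-valued setting, using the slicewise description of the fibers $\mathscr{F}(h)$ via the projection $\xi$. Recall that $\Phi \in \mathscr{F}(h)$ if and only if $\delta_x \circ \Phi \in \pi^{-1}(h(x))$ for all $x \in B_{c_0}$, and $\Phi \in \mathscr{F}(g_1,h)$ if and only if $\delta_x \circ \Phi \in \pi^{-1}((g_1(x), h(x)))$ for all $x \in B_{c_0}$. So the task is to produce a Gleason isometry between these two fibers that is compatible with this fiberwise structure, for which the natural candidate imitates the scalar construction with the parameter $b$ replaced by the value $g_1(x)$ at each slice.

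First I would define, for $\Phi \in \mathscr{F}(h)$, the homomorphism $R(\Phi) \in \M_\infty(B_{c_0},B_{c_0})$ by
\begin{equation*}
R(\Phi)(f)(x) = \Phi\big(f \circ \Lambda_{g_1(x)}\big)(x), \qquad f \in \H^\infty(B_{c_0}),\ x \in B_{c_0},
\end{equation*}
where $\Lambda_{b}(y) = (b,y)$ as in Remark~\ref{TaylorFalco}; more precisely one first checks that $[(x,x') \mapsto \Phi(f \circ \Lambda_{g_1(x')})(x)]$ is separately holomorphic, hence holomorphic by Hartogs' theorem, and then restricts to the diagonal, exactly as in the proof of Theorem~\ref{bola-en-fibra-vectorial}. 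Evaluating at $\delta_x$ gives $\delta_x \circ R(\Phi) = \psi_x$, the image of $\delta_x \circ \Phi$ under the scalar Taylor--Falcó isometry from the fiber over $h(x)$ to the fiber over $(g_1(x), h(x))$; so $\delta_x \circ R(\Phi) \in \pi^{-1}((g_1(x),h(x)))$ and $R$ indeed maps $\mathscr{F}(h)$ into $\mathscr{F}(g_1,h)$. To build the inverse I would use the key equality from Remark~\ref{TaylorFalco}: if $\psi$ lies in the fiber over $(b,w)$ then $\psi(f) = \psi(f \circ \Lambda_b \circ S)$ for all $f$; applied slicewise, for $\Psi \in \mathscr{F}(g_1,h)$ this says $\delta_x \circ \Psi$ is determined on functions of the form $f \circ \Lambda_{g_1(x)} \circ S$, and one sets $T(\Psi)(f)(x) = \Psi(f \circ S)(x)$, checking $T \circ R = \mathrm{id}$ and $R \circ T = \mathrm{id}$ using that equality on each slice.

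Next I would verify the Gleason isometry. For this one computes, using $\delta_x \circ R(\Phi) = \psi_x$ and the fact that the scalar map $\varphi \mapsto \psi_x$ is a Gleason isometry on each fiber,
\begin{equation*}
\sigma(R(\Phi), R(\Phi')) = \sup_{x \in B_{c_0}} \rho(\delta_x \circ R(\Phi), \delta_x \circ R(\Phi')) = \sup_{x \in B_{c_0}} \rho(\delta_x \circ \Phi, \delta_x \circ \Phi') = \sigma(\Phi,\Phi'),
\end{equation*}
and the same for $T$; since $R$ is a bijection between the fibers preserving $\sigma$, it also preserves the Gleason metric $\|\cdot - \cdot\|$ and therefore carries Gleason parts of $\mathscr{F}(h)$ onto Gleason parts of $\mathscr{F}(g_1,h)$.

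The main obstacle I anticipate is the bookkeeping around the slicewise application of the scalar Taylor--Falcó construction: one must make sure that the scalar identity $\psi(f) = \psi(f \circ \Lambda_b \circ S)$ is applied with $b = g_1(x)$ varying holomorphically in $x$, and that the resulting $x$-dependent family assembles into a genuine algebra homomorphism $\H^\infty(B_{c_0}) \to \H^\infty(B_{c_0})$ — i.e.\ that $R(\Phi)(f)$ is holomorphic and bounded in $x$, and that $R(\Phi)$ is multiplicative. Holomorphy is handled by Hartogs as above, boundedness is immediate from $\|f \circ \Lambda_{g_1(x)}\| \le \|f\|$, and multiplicativity follows slicewise from multiplicativity of each $\delta_x \circ \Phi$ together with the fact that $f \mapsto f \circ \Lambda_{g_1(x)}$ is an algebra homomorphism. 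Once these routine verifications are in place the lemma follows.
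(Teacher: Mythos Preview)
Your proposal is correct and follows essentially the same approach as the paper: you define the same map $R(\Phi)(f)(x)=\Phi(f\circ\Lambda_{g_1(x)})(x)$, use Hartogs' theorem for holomorphy, build the inverse via $T(\Psi)(f)(x)=\Psi(f\circ S)(x)$ together with the scalar identity $\psi(f)=\psi(f\circ\Lambda_b\circ S)$ applied at each slice $x$, and deduce the Gleason isometry from the slicewise scalar isometry of Remark~\ref{TaylorFalco}. The only cosmetic differences are that the paper verifies $\xi(R_{g_1}(\Phi))=(g_1,h)$ by a direct computation with $x^*\in\ell_1$ rather than via the slicewise fiber description, and phrases the isometry conclusion slightly more tersely.
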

	
	\begin{proof}
		Consider the mapping: $R_{g_1}: \mathscr{F}(h) \to \mathscr{F}(g_1,h)$ given by
		\begin{align*}
		R_{g_1}(\Phi)(f)(x) &= \Phi (f\circ \Lambda_{g_1(x)}) (x).
		\end{align*}
		To check that $R_{g_1}$ is a well-defined Gleason isometry take $f \in \mathcal{H}^\infty(B_{c_0})$ and $\Phi \in \mathscr{F}(h)$. For $x \in B_{c_0}$ it is clear that $f\circ  \Lambda_{g_1(x)}$ belongs to $\mathcal{H}^\infty(B_{c_0})$. We need to go a step further and check that $[x \mapsto \Phi (f\circ \Lambda_{g_1(x)}) (x)]$ is a bounded holomorphic mapping in $B_{c_0}$. Since the mapping $[(x,y) \mapsto \Phi (f\circ \Lambda_{g_1(x)})(y)]$ is separately holomorphic for $x$ and $y$ in $B_{c_0}$, by Hartog's Theorem, it is holomorphic as a function of two variables. Thus, it is so when restricted to the diagonal. Additionally, it is clear that this function is bounded by $\|f\|$ and that $R_{g_1}(\Phi)$ is a homomorphism; so  $R_{g_1}(\Phi) \in \mathcal{M}_\infty(B_{c_0},B_{c_0})$.
		Now we want to see that $\xi(R_{g_1}(\Phi)) = (g_1,h)$.  Take $x^* \in \ell_1$ and denote by $S_1$ the left shift operator from $\ell_1$ onto $\ell_1$. Then,
		\begin{align*}
		R_{g_1}(\Phi)(x^*)(x) &= \Phi(x^*\circ \Lambda_{g_1(x)})(x) = \Phi(x^*_1 g_1(x) + S_1x^*)(x) \\
		&= x^*_1g_1(x) + \Phi(S_1 x^*)(x) = x^*_1g_1(x) + h(x)(S_1 x^*)\\
&=x^*(g_1(x),h(x)).
		\end{align*}
		This says that $R_{g_1}$ maps the fiber over $h$ to the fiber over $(g_1,h)$ and thus $R_{g_1}$ is well-defined.	
		Next we  prove that $R_{g_1}$ is onto. For that, let $\Psi \in \mathscr{F}(g_1,h)$ and define the mapping $\Phi \in \mathcal{M}_\infty(B_{c_0},B_{c_0})$ as
		\begin{align*}
			\Phi(f)(x) = \Psi(f\circ S)(x).
		\end{align*}
		Reasoning as before, it is readily seen that $\Phi$ is a homomorphism from $\mathcal{H}^\infty(B_{c_0})$ to $\mathcal{H}^\infty(B_{c_0})$ and $\xi(\Phi) = h$. What is more, for any $x \in B_{c_0}$ and $f \in \mathcal{H}^\infty(B_{c_0})$ we can  apply Remark \ref{TaylorFalco} to $\delta_x \circ \Psi$ obtaining $\delta_x \circ \Psi (f) = \delta_x \circ \Psi (f\circ \Lambda_{g_1(x)}\circ S)$. It follows that
		\begin{align*}
			\delta_x \circ R_{g_1}(\Phi)(f) &= \Phi(f\circ \Lambda_{g_1(x)})(x)\\
			&= \Psi(f\circ \Lambda_{g_1(x)}\circ S)(x)= \delta_x \circ \Psi (f).
		\end{align*}
		So, $R_{g_1}(\Phi)=\Psi$ meaning that $R_{g_1}$ is onto. Finally we derive that $R_{g_1}$ is a Gleason isometry as a consequence of the Gleason isometry commented in Remark \ref{TaylorFalco} for the scalar-valued case and the fact that $S$ and $\Lambda_b$ are linear contractions satisfying $S\circ \Lambda_b=Id$.
	\end{proof}

As in the scalar-valued case, it is important to note that the procedure of the previous lemma can be repeated in several coordinates, not necessarily the first ones. Hence if $h \in \overline{B}_{\mathcal{H}^\infty(B_{c_0},\ell_\infty)}$ and we insert to $(h_n)$ finitely many coordinates $g_1,\dots, g_k$ with $g_i \in \mathcal{H}^\infty(B_{c_0})$ and $g_i(B_{c_0}) \subset \mathbb{D}$ then the fiber over the resulting function is Gleason isometric to the fiber over $h$.

	\section{Gleason Parts} \label{Gleason-parts}

Before get into the subject let us recall some information about Gleason parts of general vector-valued spectra. If $\A$ and $\B$ are uniform algebras with (scalar-valued) spectra $\M(\A)$ and $\M(\B)$ respectively, let us denote by $\M(\A,\B)$ their vector-valued spectrum, that is the set of nonzero algebra homomorphisms from $\A$ into $\B$. For any $\Phi\in\M(\A,\B)$ it is known that its transpose $\Phi^*$ maps $\M(\B)$ into $\M(\A)$.  

Recall that
\[\mathcal {GP}(\Phi) = \{\Psi : \, \|\Phi - \Psi\| < 2\}=\{\Psi : \, \|\Phi^* - \Psi^*\| < 2\}= \{\Psi : \sup_{\varphi\in\M(\B)}\rho(\Phi^*(\varphi), \Psi^*(\varphi))<1\}.
\] An element $\Phi$ in $\M(\mathcal{A},\mathcal{B})$ is then said to be isolated in the Gleason metric if $\mathcal{GP}(\Phi)=\{\Phi\}$.

It is proved in \cite[Th. 6.2]{GalindoGamelinLindstrom} that if $\M(\B)$ is connected and $\Phi^*$ is a non constant  mapping taking strong boundary points of $\M(\B)$ to strong boundary points of $\M(\A)$ then  $\Phi$ forms a singleton \textit{hyperbolic} vicinity in $\M(\A,\B)$. This in turn implies that the Gleason part of $\Phi$ in $\M(\A,\B)$ is a singleton. Since we are focusing solely on the Gleason topology, we can slightly extend the result in the following way. We include the proof for the sake of completeness, even if it is  naturally adapted from the one in \cite[Th. 6.2]{GalindoGamelinLindstrom}.

	\begin{proposition} \label{sbp-to-singleton}
		Let $\A$ and $\B$ be uniform algebras and $\Phi\in\M(\A,\B)$. If $\Phi^*$ maps each strong boundary point of $\M(\B)$ into a singleton Gleason part of $\M(\A)$ then $\Phi$ is isolated in the Gleason metric for $\mathcal{M}(\mathcal{A},\mathcal{B})$.
	\end{proposition}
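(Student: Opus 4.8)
The plan is to prove directly that every $\Psi\in\M(\A,\B)$ with $\|\Phi-\Psi\|<2$ must coincide with $\Phi$; since $\mathcal{GP}(\Phi)=\{\Psi:\|\Phi-\Psi\|<2\}$, this is exactly the assertion that $\Phi$ is isolated in the Gleason metric.

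First I would record the elementary identity
\begin{align*}
\|\Phi-\Psi\| &=\sup_{\|f\|\le 1}\|\Phi(f)-\Psi(f)\|=\sup_{\varphi\in\M(\B)}\ \sup_{\|f\|\le 1}|\varphi(\Phi(f))-\varphi(\Psi(f))|\\
&=\sup_{\varphi\in\M(\B)}\|\Phi^*(\varphi)-\Psi^*(\varphi)\|,
\end{align*}
valid because the norm of the uniform algebra $\B$ is $\|b\|=\sup_{\varphi\in\M(\B)}|\varphi(b)|$ and because $\Phi^*,\Psi^*$ map $\M(\B)$ into $\M(\A)$. Consequently, under the assumption $\|\Phi-\Psi\|<2$, I obtain $\|\Phi^*(\varphi)-\Psi^*(\varphi)\|<2$ for \emph{every} $\varphi\in\M(\B)$, that is, $\Psi^*(\varphi)\in\mathcal{GP}(\Phi^*(\varphi))$ for all such $\varphi$.

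Next I would specialize to the strong boundary points of $\M(\B)$. For such a point $\varphi$ the hypothesis says that $\mathcal{GP}(\Phi^*(\varphi))$ is a singleton, so the previous step forces $\Psi^*(\varphi)=\Phi^*(\varphi)$. Fixing $f\in\A$ and letting $G_f=\Phi(f)-\Psi(f)\in\B$, this yields
\[
\varphi(G_f)=\Phi^*(\varphi)(f)-\Psi^*(\varphi)(f)=0
\]
at every strong boundary point $\varphi$. I would then invoke the classical fact that the strong boundary points of a uniform algebra are dense in its Shilov boundary $\partial\B$: since $\varphi\mapsto\varphi(G_f)$ is continuous on $\M(\B)$, it must vanish on all of $\partial\B$, and since $\partial\B$ is a boundary for $\B$ we get $\|G_f\|=0$. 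Hence $\Phi(f)=\Psi(f)$ for every $f\in\A$, i.e. $\Phi=\Psi$, as wanted.

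I expect the only delicate point to be this last passage — upgrading ``$\Phi^*$ and $\Psi^*$ agree at every strong boundary point of $\M(\B)$'' to ``$\Phi$ and $\Psi$ agree on all of $\A$'' — which rests on the boundary/density property of the strong boundary points rather than on any new estimate; everything else is bookkeeping. This is exactly where the argument parallels the proof of \cite[Th. 6.2]{GalindoGamelinLindstrom}, the simplification here being that, since we care only about the Gleason (norm) topology, it suffices that the images of strong boundary points fall into singleton Gleason parts rather than being strong boundary points themselves.
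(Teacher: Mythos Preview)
Your proof is correct and follows essentially the same route as the paper's: assume $\Psi\in\mathcal{GP}(\Phi)$, deduce that $\Phi^*(\varphi)=\Psi^*(\varphi)$ at every strong boundary point $\varphi$ of $\M(\B)$ via the singleton hypothesis, and then conclude $\Phi=\Psi$ from the fact that strong boundary points determine the norm in $\B$. The only cosmetic difference is in this last step: the paper quotes directly that every element of a uniform algebra attains its norm at a strong boundary point, whereas you route through density of strong boundary points in the Shilov boundary --- both are standard and equivalent for this purpose.
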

	\begin{proof}
		Suppose that  $\Psi\in\M(\A,\B)$ is in the same Gleason part as $\Phi$. Then for all $\varphi \in \mathcal{M}(\mathcal{B})$ we have that
	\begin{align*}
		\rho(\varphi \circ\Phi, \varphi \circ\Psi)=\rho(\Phi^*(\varphi), \Psi^*(\varphi)) \leq r < 1.
	\end{align*}
	If $\varphi$ is a strong boundary point our hypothesis tells us that $\Phi^*(\varphi) = \Psi^*(\varphi)$. Thus,
	\begin{align*}
		\varphi (\Phi(f)) = \varphi (\Psi(f)), \ \forall \varphi \textrm{ strong boundary point of } \M(\B),\ \forall f \in \mathcal{A}.
	\end{align*}	
	Since every element of a uniform algebra reaches its norm at a strong boundary point of its spectrum, we derive that $\Phi(f)=\Psi(f)$, for all $f$ and hence $\Phi=\Psi$. 
	\end{proof}

We have begun in \cite{DimantSinger}  studying  relationships about fibers and Gleason parts for the spectrum $\M_\infty(B_X,B_Y)$. Now we want to go deeper for the particular case $\M_\infty(B_{c_0},B_{c_0})$. For the scalar-valued spectrum $\M_\infty(B_{c_0})$ the description of its Gleason parts (and their interaction with fibers) was addressed in \cite{AronDimantLassalleMaestre}. There, the starting point was describing the Gleason parts for the simpler spectrum $\M_u(B_{c_0})$. Here, we follow that path devoting us first to the spectrum $\M_{u,\infty}(B_{c_0},B_{c_0})$.

	\subsection{Gleason parts in $\M_{u,\infty}(B_{c_0},B_{c_0})$.}
	
	The projection $\xi:\M_\infty(B_{c_0},B_{c_0})\to \overline B_{\H^\infty(B_{c_0},\ell_\infty)}$ can be restricted to $\M_{u,\infty}(B_{c_0},B_{c_0})$ maintaining the same image. Also, as finite type polynomials are dense in $\mathcal{A}_u(B_{c_0})$ and the composition homomorphism $C_g$ is defined from $\A_u(B_{c_0})$ to $\H_\infty(B_{c_0})$ for every $g \in \overline{B}_{\mathcal{H}_\infty(B_{c_0},\ell_\infty)}$, an analogous argument to \cite[p. 10]{DiGaMaSe} and \cite[Prop. 2.2]{DimantSinger} yields that $\xi$ is one-to-one. In other words, $\M_{u,\infty}(B_{c_0},B_{c_0})=\{C_g \colon g \in \overline{B}_{\mathcal{H}_\infty(B_{c_0},\ell_\infty)}\}$. That is, each fiber for this spectrum is a singleton. Moreover, for any $x\in B_{c_0}$ and $g \in \overline{B}_{\mathcal{H}_\infty(B_{c_0},\ell_\infty)}$, we know that $\delta_x\circ C_g=\delta_{g(x)}\in \M_u(B_{c_0})$. So, if we denote by $\rho_u$ the pseudo-hyperbolic distance  for the spectrum $\M_u(B_{c_0})$,    we have
	$$	\mathcal{GP}(C_g)  =  \{C_h : \sup_{x \in B_{c_0}} \rho_u(\delta_x \circ C_g, \delta_x \circ C_h) < 1 \} = \{C_h : \sup_{x \in B_{c_0}} \rho_u(\delta_{g(x)},\delta_{h(x)}) < 1 \}
$$
Recall that the equality proved in \cite[Th. 2.4]{AronDimantLassalleMaestre} says (using $\rho_u$ also for the pseudo-hyperbolic distance in the spectrum $\M_u(\D)$)
\begin{equation} \label{rho_u}
   \rho_u(\delta_{g(x)},\delta_{h(x)})= \sup_{n\in \mathbb N} \rho_u(\delta_{g_n(x)},\delta_{h_n(x)}) = \sup_{n\in\mathbb N} \left|\frac{g_n(x)-h_n(x)}{1-\overline{g_n(x)}h_n(x)} \right|, 
\end{equation}
 where the last fraction should be replaced by 0 whenever $g_n(x)=h_n(x)$.
 The properties of the Gleason part $\mathcal{GP}(C_g)$ are then closely related to the properties of the associated  holomorphic function $g$.

 In that regard, the statement of \cite[Prop. 5.1]{DimantSinger} can be easily adapted to our setting so that we obtain, as in the case of $\M_\infty(B_X,B_Y)$ a splitting of the fibers in three sets: interior fibers, middle fibers and edge fibers. Elements from different kind of fibers can not share a  Gleason part. Precisely, the situation here (with singleton fibers) is the following:
 
 \begin{itemize}
     \item Interior fibers are those $C_g$ with $\|g\|<1$. They all share the same Gleason part.
     \item Middle fibers are those $C_g$ with $g(B_{c_0})\subset B_{\ell_\infty}$ and $\|g\|=1$.
     \item Edge fibers are those $C_g$ with $g(B_{c_0})\subset S_{\ell_\infty}$.
 \end{itemize}

Nothing more can be said about \textit{interior fibers} since there is only one Gleason part for all of them. But within \textit{middle fibers} and \textit{edge fibers} we will see that there are a lot of Gleason parts, some of them singleton and some of them containing \textit{balls} of elements. We begin with two known (or easily deduced) examples of singleton Gleason parts.

\begin{example} \label{biholomorphic}
   If $g:B_{c_0}\to B_{c_0}$ is a biholomorphic function then $C_g^*:\M_\infty(B_{c_0})\to \M_u(B_{c_0})$ maps strong boundary points into singleton Gleason parts. Indeed, as it was noted in \cite[p. 15]{DimantSinger}, $C^*_g$ maps strong boundary points to strong boundary points when viewed as a self map of $\M_\infty(B_{c_0})$. Additionally, by \cite[Prop. 3.6]{AronDimantLassalleMaestre}, any strong boundary point in $\M_\infty(B_{c_0})$ is projected over $\mathbb{T}^\infty$. It follows that $C_g^*:\M_\infty(B_{c_0})\to \M_u(B_{c_0})$ maps strong boundary points into singleton Gleason parts.  As a result, any such $C_g$ is a middle fiber isolated in the Gleason metric. Examples of such functions $g$ are for instance $g=Id$ or $g(x)=\left(\frac{a_n-x_n}{1- \overline{a}_nx_n}\right)_n$ with $(a_n)_n\in B_{c_0}$.
\end{example}

\begin{example} \label{edge-constant}
    Let $g:B_{c_0}\to S_{\ell_\infty}$ be a constant function $g(x)=(a_n)_n$ with $|a_n|=1$ for all $n$. Then, it is clear by \eqref{rho_u} that $C_g$ is an edge fiber isolated in the Gleason metric.
\end{example}

In view of the previous examples we wonder if there are other kind of singleton Gleason parts. That is, if there exist isolated edge fibers $C_g$ with $g$ non constant or isolated middle fibers $C_g$ with $g$ non biholomorphic. In order to answer these questions we first present a result that relates singleton Gleason parts of $\M_{u,\infty}(B_{c_0},B_{c_0})$ with singleton Gleason parts of $\M(\A(\D), \H^\infty(B_{c_0}))$ (where $\A(\D)=\A_u(\D)$ is the algebra of holomorphic functions in $\D$ which are continuous in $\overline{\D}$).

	\begin{lemma} \label{coordenadas}
	Let $g\in \overline{B}_{\H^\infty(B_{c_0}, \ell_\infty)}$, $g=(g_n)_n$. Then, $C_g$ is isolated in $\M_{u,\infty}(B_{c_0},B_{c_0})$ if and only if $C_{g_n}$ is isolated in $\M(\A(\D), \H^\infty(B_{c_0}))$ for all $n\in\mathbb N$.
	\end{lemma}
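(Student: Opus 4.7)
The plan is to reduce everything to the coordinatewise formula for $\rho_u$ recorded in \eqref{rho_u}. Recall that, by the argument sketched just before the lemma, $\M_{u,\infty}(B_{c_0},B_{c_0})$ is in bijection with $\overline{B}_{\H^\infty(B_{c_0},\ell_\infty)}$ through $g\mapsto C_g$. The very same reasoning, applied to the one-generator algebra $\A(\D)$ (whose polynomials are dense), identifies $\M(\A(\D),\H^\infty(B_{c_0}))$ with $\{g_n\in \overline{B}_{\H^\infty(B_{c_0})} : g_n(B_{c_0})\subset \overline{\D}\}$ via $g_n\mapsto C_{g_n}$. Combined with \eqref{rho_u} these identifications yield
\begin{align*}
\mathcal{GP}(C_g) &= \Bigl\{C_h : \sup_{x\in B_{c_0}}\sup_{n\in\N}\Bigl|\tfrac{g_n(x)-h_n(x)}{1-\overline{g_n(x)}h_n(x)}\Bigr|<1\Bigr\},\\
\mathcal{GP}(C_{g_n}) &= \Bigl\{C_{h_n} : \sup_{x\in B_{c_0}}\Bigl|\tfrac{g_n(x)-h_n(x)}{1-\overline{g_n(x)}h_n(x)}\Bigr|<1\Bigr\}.
\end{align*}

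For the ``$\Leftarrow$'' direction, assume each $C_{g_n}$ is isolated and pick any $C_h\in\mathcal{GP}(C_g)$. The double supremum dominates each coordinate supremum, so $C_{h_n}\in\mathcal{GP}(C_{g_n})$ for every $n$. By hypothesis this forces $h_n=g_n$ for all $n$, whence $h=g$ and $C_h=C_g$.

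For the ``$\Rightarrow$'' direction I argue by contraposition. Suppose $C_{g_m}$ fails to be isolated for some $m$: choose $h_m\neq g_m$ in $\mathcal{GP}(C_{g_m})$ and form the coordinate-swapped map $h=(g_1,\dots,g_{m-1},h_m,g_{m+1},\dots)$. Since $h_n=g_n$ for $n\neq m$, all but one coordinate contributes $0$ to the outer supremum, while the $m$-th coordinate contributes exactly $\sup_x\bigl|\tfrac{g_m(x)-h_m(x)}{1-\overline{g_m(x)}h_m(x)}\bigr|<1$. Hence $C_h\in\mathcal{GP}(C_g)$ with $h\neq g$, contradicting isolation of $C_g$.

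The only routine point to verify is that the perturbed function $h$ indeed belongs to $\overline{B}_{\H^\infty(B_{c_0},\ell_\infty)}$: boundedness by $1$ is immediate because each coordinate is in $\overline{B}_{\H^\infty(B_{c_0})}$, and holomorphicity as an $\ell_\infty$-valued map follows from the coordinatewise holomorphicity together with the uniform bound, via the standard passage from weak to norm holomorphy for bounded maps into $\ell_\infty$. No essential obstacle is expected; the content of the lemma is precisely that the Gleason metric of the vector-valued spectrum is computed slot-by-slot.
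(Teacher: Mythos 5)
Your proof is correct and follows essentially the same route as the paper's: both directions reduce to the coordinatewise formula \eqref{rho_u}, with the domination of each coordinate supremum giving one implication and the single-coordinate perturbation $h=(g_1,\dots,h_m,\dots)$ giving the other. Your additional remarks (the identification of $\M(\A(\D),\H^\infty(B_{c_0}))$ with composition operators and the holomorphy of the perturbed $h$) are points the paper leaves implicit, and they check out.
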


	\begin{proof}
		Let $g \not= h \in \overline{B}_{\mathcal{H}^\infty(B_{c_0},\ell_\infty)}$. If $C_{g_n}$ is isolated for every $n \in \mathbb{N}$, we have that
			\begin{align*}
				\sup_{x \in B_{c_0}} \rho_u(\delta_x \circ C_g, \delta_x \circ C_h) = \sup_{x \in B_{c_0}} \sup_{n \in \mathbb{N}} \rho_u(\delta_x \circ C_{g_n}, \delta_x \circ C_{h_n}) = 1.
			\end{align*}
		Thus, $C_g$ is isolated.
		Conversely, assume that there exists $n_0 \in \mathbb{N}$ such that $C_{g_{n_0}}$ is not isolated in $\mathcal{M}(\mathcal{A}(\D),\mathcal{H}^\infty(B_{c_0}))$. Taking $C_{\widehat{g}_{n_0}} \in \mathcal{GP}(C_{g_{n_0}})$ with $\widehat g_{n_0}\not= g_{n_0}$ and defining $h \in \overline{B}_{\mathcal{H}^\infty(B_{c_0},\ell_\infty)}$ as $h_n = g_n$ for $n \not= n_0$ and $h_{n_0} = \widehat{g}_{n_0}$ gives us an element in $\mathcal{GP}(C_g)$ different than $C_g$.
	\end{proof}

The previous Lemma together with Example \ref{biholomorphic} (which shows that $C_{Id}$ is isolated) implies that the composition operator corresponding to any coordinate function $g_n: B_{c_0} \to \mathbb{D}$, $g_n(x)=x_n$ is isolated in $\mathcal{M}(\mathcal{A}(\D),\mathcal{H}^\infty(B_{c_0}))$. This can also been seen by easily checking that $C_{g_n}$ sends strong boundary points of $\M_\infty(B_{c_0})$ into singleton Gleason parts of $\M(\A(\D))$. Consequently, again by the previous Lemma, any combination of coordinate functions produces an isolated middle fiber:

\begin{example} \label{not-biholomorphic}
    Let $g:B_{c_0}\to B_{\ell_\infty}$ given by $g(x)=(x_1)_n$. Then, $C_g$ is a middle fiber isolated in the Gleason metric, even though $g$ is not biholomorphic. Clearly, any other election of $g$ with $g_n(x)=x_{k_n}$ for all $n$ (and any $k_n$) has an associated composition homomorphism which is isolated in the Gleason metric.
\end{example}

Lemma \ref{coordenadas} is also useful to provide an example of an isolated edge fiber associated to a non constant function. Indeed, it is readily seen, that any M\"obius function of a coordinate $g:B_{c_0}\to \D$, $g(x)=\frac{a-x_n}{1-\overline ax_n}$ (with $|a|<1$) has an associated composition homomorphism which is isolated in $\mathcal{M}(\mathcal{A}(\D),\mathcal{H}^\infty(B_{c_0}))$. Hence, we can combine a sequence of these functions to obtain a singleton edge fiber:

\begin{example} \label{edge-non-constant}
    Let $(a_n)_n$ be an increasing sequence of real numbers with $0<a_n<1$, for all $n$ and $a_n\to 1$. Consider $g:B_{c_0}\to \ell_\infty$ given by $g(x)=\left(\frac{a_n-x_n}{1- a_nx_n}\right)_n$. Then $g(B_{c_0})\subset S_{\ell_\infty}$ and thus $C_g$ is an edge fiber. Moreover, it is clear that $g$ is non constant and that by the previous argument, $C_g$ is isolated in $\M_{u,\infty}(B_{c_0},B_{c_0})$.
\end{example}

Let us now see that the fact that $C_g$ is isolated forces $g$ to be an extreme point of  $\overline{B}_{\mathcal{H}^\infty(B_{c_0},\ell_\infty)}$. This is stated in the following proposition whose proof is modelled after \cite[Ex. 5.5]{DimantSinger} which, in turn, was adapted from \cite[Ex. 2]{MacCluerOhnoZhao}.
	
	\begin{proposition}
		Let $g \in \overline{B}_{\mathcal{H}^\infty(B_{c_0},\ell_\infty)}$ such that $C_g$ is isolated in $\M_{u,\infty}(B_{c_0},B_{c_0})$. Then $g$ is an extreme point of $\overline{B}_{\mathcal{H}^\infty(B_{c_0},\ell_\infty)}$.
	\end{proposition}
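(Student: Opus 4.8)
The plan is to prove the contrapositive. Suppose $g=(g_n)_n$ is \emph{not} an extreme point of $\overline{B}_{\mathcal{H}^\infty(B_{c_0},\ell_\infty)}$; I will exhibit $h\neq g$ in $\overline{B}_{\mathcal{H}^\infty(B_{c_0},\ell_\infty)}$ with $C_h\in\mathcal{GP}(C_g)$. Since $\xi$ is one-to-one on $\M_{u,\infty}(B_{c_0},B_{c_0})$ this gives $C_h\neq C_g$, so $C_g$ is not isolated. Non-extremality of $g$ means there is $\psi=(\psi_n)_n\in\mathcal{H}^\infty(B_{c_0},\ell_\infty)$, $\psi\not\equiv 0$, with $g+\psi$ and $g-\psi$ both in $\overline{B}_{\mathcal{H}^\infty(B_{c_0},\ell_\infty)}$. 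Pick $n_0$ with $\psi_{n_0}\not\equiv 0$. Reading the two norm bounds off at the fixed coordinate $n_0$ and a fixed $x$ gives $|g_{n_0}(x)\pm\psi_{n_0}(x)|\le 1$, and the parallelogram identity yields
\[
|g_{n_0}(x)|^2+|\psi_{n_0}(x)|^2\le 1\qquad\text{for all }x\in B_{c_0}.
\]
(One could also pass through Lemma~\ref{coordenadas}, but the construction below already lives in a single coordinate.)

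The subtle point is that membership in $\mathcal{GP}(C_g)$ requires the estimate \eqref{rho_u} to hold \emph{uniformly} in $x$. A first-order perturbation $g+t\psi$ does satisfy $\rho_u(\delta_{g(x)},\delta_{(g+t\psi)(x)})<1$ at each individual $x$, but the supremum over $x$ can equal $1$: along a sequence with $|g_{n_0}(x)|\to 1$ the only bound at hand, $|\psi_{n_0}(x)|\le\sqrt{1-|g_{n_0}(x)|^2}$, is too weak near the boundary. To repair this I would perturb in the \emph{squared} direction. Put
\[
k:=\tfrac12\,\psi_{n_0}^2\in\mathcal{H}^\infty(B_{c_0}),
\]
which is not identically zero (an analytic function vanishes iff its square does) and satisfies, for every $x\in B_{c_0}$,
\[
|k(x)|=\tfrac12|\psi_{n_0}(x)|^2\le\tfrac12\bigl(1-|g_{n_0}(x)|^2\bigr)=\tfrac12\bigl(1-|g_{n_0}(x)|\bigr)\bigl(1+|g_{n_0}(x)|\bigr)\le 1-|g_{n_0}(x)|.
\]
Thus $|g_{n_0}(x)|+|k(x)|\le 1$ on $B_{c_0}$; this plays here the role that, in one complex variable, a de Leeuw--Rudin companion function plays, and it is exactly what makes the forthcoming estimate uniform.

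Now define $h=(h_n)_n$ by $h_n=g_n$ for $n\neq n_0$ and $h_{n_0}=g_{n_0}+t\,k$, for a fixed $t\in(0,1)$. Then $|h_{n_0}(x)|\le|g_{n_0}(x)|+|k(x)|\le 1$, so $h\in\overline{B}_{\mathcal{H}^\infty(B_{c_0},\ell_\infty)}$, and $h\neq g$ because $t\,k\not\equiv 0$. Since $g$ and $h$ differ only in the $n_0$-th coordinate, \eqref{rho_u} gives
\[
\sigma(C_g,C_h)=\sup_{x\in B_{c_0}}\left|\frac{t\,k(x)}{1-|g_{n_0}(x)|^2-t\,\overline{g_{n_0}(x)}\,k(x)}\right|,
\]
and I would bound the denominator by
\[
\left|1-|g_{n_0}(x)|^2-t\,\overline{g_{n_0}(x)}\,k(x)\right|\ge\bigl(1-|g_{n_0}(x)|^2\bigr)-t|g_{n_0}(x)|\,|k(x)|\ge|k(x)|\bigl(1+(1-t)|g_{n_0}(x)|\bigr)\ge|k(x)|,
\]
using $|k(x)|\bigl(1+|g_{n_0}(x)|\bigr)\le 1-|g_{n_0}(x)|^2$ (and noting the whole quotient is $0$ where $k(x)=0$). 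Hence $\sigma(C_g,C_h)\le t<1$, i.e.\ $C_h\in\mathcal{GP}(C_g)$, contradicting the isolatedness of $C_g$. Therefore $g$ must be an extreme point of $\overline{B}_{\mathcal{H}^\infty(B_{c_0},\ell_\infty)}$.

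The only genuinely nonroutine step is the second paragraph: recognizing that a linear perturbation need not remain uniformly inside the Gleason part, and replacing it by $\tfrac12\psi_{n_0}^2$, whose modulus is automatically dominated by $1-|g_{n_0}|$. The remainder (passing to a coordinate, the parallelogram identity, the pseudohyperbolic computation) is routine and follows the pattern of \cite[Ex.~5.5]{DimantSinger}, ultimately of \cite[Ex.~2]{MacCluerOhnoZhao}.
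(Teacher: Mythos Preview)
Your argument is correct and follows the same core idea as the paper: perturb $g$ by a constant times the \emph{square} of a witness to non-extremality, which yields the crucial bound $|k(x)|\le 1-|g_{n_0}(x)|$ making the pseudohyperbolic estimate uniform in $x$. The only cosmetic differences are that you perturb in a single coordinate $n_0$ while the paper perturbs all coordinates at once (writing $j=g+k(f-h)^2$ with $0<k<1/8$), and your constants give the slightly cleaner bound $\sigma(C_g,C_h)\le t$ versus the paper's $k/(1/4-k)$.
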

		
	\begin{proof}
		Suppose that $g \in \overline{B}_{\mathcal{H}^\infty(B_{c_0},\ell_\infty)}$ is not an extreme point, then there exist $f\not= h \in \overline{B}_{\mathcal{H}^\infty(B_{c_0},\ell_\infty)}$ such that $g = (f+h)/2$. Let $j(x) = g(x) + k(f(x) - h(x))^2$ with $0< k < 1/8$. Recall that $\|j\| = \sup_{n \in \mathbb{N}} \|j_n\|$. Now a quick computation shows that
		\begin{align*}
			\|j_n\| &= \sup_{x \in B_{c_0}} |g_n(x) + k(f_n(x) - h_n(x))^2| = \sup_{x \in B_{c_0}} \left|\frac{f_n(x)+h_n(x)}{2} + k(f_n(x) - h_n(x))^2\right|\\
			&\leq \sup_{x \in B_{c_0}}\left|\frac{f_n(x)+h_n(x)}{2}\right| + k|(f_n(x) - h_n(x))^2|\\ &\leq \sup_{x \in B_{c_0}}\left|\frac{f_n(x)+h_n(x)}{2}\right| + 4k \left(1 - \left|\frac{f_n(x)+h_n(x)}{2}\right|^2\right)\leq 1.
		\end{align*}
		Thus, $j\in \overline{B}_{\mathcal{H}^\infty(B_{c_0},\ell_\infty)}$. 
		To check that $C_j$ is  in the same Gleason part as $C_g$ we compute, for each $n \in \mathbb{N}$,
		\begin{align*}
			\left|\frac{g_n(x) - j_n(x)}{1 -\overline{g_n(x)}j_n(x) }\right| &= \left|\frac{k(f_n(x)-h_n(x))^2}{1 - \frac{\overline{f_n(x)}+ \overline{h_n(x)}}{2}\left(\frac{f_n(x)+h_n(x)}{2} + k(f_n(x) - h_n(x))^2\right)}\right|\\
			&= \frac{k|f_n(x)-h_n(x)|^2}{\left|1- |\frac{f_n(x)+h_n(x)}{2}|^2 - k\left(\frac{\overline{f_n(x)}+ \overline{h_n(x)}}{2}\right)(f_n(x)-h_n(x))^2\right| }\\
			&\leq \frac{k}{1/4 - k} < 1.
		\end{align*}
		So we conclude that $C_j \in \mathcal{GP}(C_g)$ and thus $C_g$ is not isolated.
	\end{proof}

\medskip

\textbf{Open question 1.} Does the reciprocal of the previous proposition hold? That is, for $g \in \overline{B}_{\mathcal{H}^\infty(B_{c_0},\ell_\infty)}$, is it equivalent that $g$ is an extreme point and that $C_g$ is isolated in $\M_{u,\infty}(B_{c_0},B_{c_0})$?

Note that if we change $B_{c_0}$ by $\D$ the answer is yes. Indeed, combining \cite[Cor. 9]{MacCluerOhnoZhao} with  \cite[Th. 4.1]{HozokawaIzuchiZheng} and a classical characterization of extreme points proved in 1957 by Arens, Buck, Carleson, Hoffman and Royden (see \cite[Th. 12]{deleeuw1958extreme} or \cite[Ch. 9]{hoffman2007banach}) it is obtained that $g\in\overline B_{\H^\infty(\D)}$ with $g(\D)\subset \D$ is an extreme point if and only if $C_g$ is isolated in the set of composition operators from $\H^\infty(\D)$ to $\H^\infty(\D)$. This is equivalent to say that any $g\in\overline B_{\H^\infty(\D)}$ is an extreme point if and only if $C_g$ is isolated in  $\M(\A(\D),\H^\infty(\D))$. 

Since it is readily seen that $g=(g_n)_n$ is a extreme point of $\overline{B}_{\mathcal{H}^\infty(B_{c_0},\ell_\infty)}$ if and only if $g_n$ is a extreme point of $\overline{B}_{\mathcal{H}^\infty(B_{c_0})}$ for every $n$, in view of Lemma \ref{coordenadas}  the open question can be reformulated as follows:
is $C_g$ isolated in $\M(\A(\D),\H^\infty(B_{c_0}))$ whenever $g$ is an extreme point of  $\overline{B}_{\mathcal{H}^\infty(B_{c_0})}$?

\medskip

To finish our intended description of Gleason parts for middle/edge fibers, we show examples of Gleason parts containing \textit{balls} of elements.

\begin{example}
    Using that all interior fibers share a Gleason part and that the distance between two homomorphisms is computed by a supremum over the coordinates it is easy to produce examples of \textit{thick} Gleason parts within middle or edge fibers. Indeed, let $g\in S_{\H^\infty(B_{c_0}, \ell_\infty)}$ (i.e.  $C_g$ is a middle or edge fiber) and consider $h\in S_{\H^\infty(B_{c_0}, \ell_\infty)}$, $h=(h_n)_n$ given by $h_{2n}= g_{n}$ and $h_{2n+1}=0$ for $n \in \mathbb{N}$. Now it is clear that $C_h$ is a middle fiber (resp. edge fiber) whether $C_g$ is a middle (resp. edge) fiber. Changing odd coordinates of $h$ by those of any function in $B_{\H^\infty(B_{c_0}, \ell_\infty)}$ we obtain a function whose associated homomorphism is in the same Gleason part as $C_h$. As a result we have that $\mathcal{GP}(C_h)$ contains a Gleason isometric copy  of $\{C_j:\, j\in B_{\H^\infty(B_{c_0}, \ell_\infty)}\}$.

\end{example}

	\subsection{Gleason parts in $\mathcal{M}_\infty(B_{c_0},B_{c_0})$.} 
For this larger spectrum our goal is to obtain some knowledge about how fibers and Gleason parts relate to each other. First of all, note that we  have already produced an interesting outcome about this relationship in Theorem \ref{bola-en-fibra-vectorial}. Indeed, the statement of the theorem, translated to \textit{Gleason parts' language}, is the following:
\begin{center}
    \begin{minipage}{0.9\textwidth}
    For every $g\in\overline B_{\H^\infty(B_{c_0},\ell_\infty)}$, the fiber $\F(g)$ intersects at least $2^c$ different Gleason parts and each of these intersections is \textit{thick} (since it is a Gleason isometric copy of an infinite dimensional ball).\\
\end{minipage}
\end{center}
\medskip
To complete our intended overview we propose the following questions:
\begin{enumerate}
    \item[(Q1)]  Which Gleason parts have elements from different fibers?
    \item[(Q2)]  Which fibers contain singleton Gleason parts?
    \end{enumerate}

Before get into the subject we observe some trivial facts regarding the interaction between Gleason parts for the spectra $\M_{u,\infty}(B_{c_0},B_{c_0})$ and $\mathcal{M}_\infty(B_{c_0},B_{c_0})$.

Whenever $g,h\in \H^\infty(B_{c_0},\ell_\infty)$ map $B_{c_0}$ into $B_{\ell_\infty}$ the composition homomorphisms $C_g$ and $C_h$ can be defined in $\M_{u,\infty}(B_{c_0},B_{c_0})$ as well as in $\mathcal{M}_\infty(B_{c_0},B_{c_0})$. Since the distance between evaluation homomorphisms coincide whether it is computed in $\M_u(B_{c_0})$  or in $\M_\infty(B_{c_0})$ we deduce the same identity in the vector-valued case:
\begin{equation} \label{C_g}
  \sigma(C_g, C_h)=\sigma_u(C_g, C_h)= \sup_{x\in B_{c_0}} \sup_{n\in\mathbb N} \left|\frac{g_n(x)-h_n(x)}{1-\overline{g_n(x)}h_n(x)} \right|.  
\end{equation}

Recall that we split the fibers in three groups: interior, middle and edge fibers. By \cite[Prop. 5.1]{DimantSinger}, all the elements of any Gleason part should belong to the same \textit{kind} of fibers (that is, all interior or all middle or all edge).
For any $g \in \overline B_{\H^\infty(B_{c_0}, \ell_\infty)}$ with $g(B_{c_0})\subset B_{\ell_\infty}$ we have that both the image of the mapping $\Psi_g$ given by Theorem \ref{bola-en-fibra-vectorial} and the corresponding composition homomorphism $C_g$ lie in the fiber $\mathscr{F}(g)$. Additionally, we know that for different values of $\eta \in \beta(\N) \setminus \N$, the images $\Psi_g^\eta(B_{\ell_\infty})$ lie in different Gleason parts. It is also worth remarking that  the images of the mappings $\Psi_g^\eta$ are also disjoint from $\mathcal{GP}(C_g)$. Indeed, this holds by pairing the equality \eqref{deltax} with Remark \ref{gleasondeltaz}.

\medskip
 
Some of the middle and edge fibers are over functions $g$ which satisfy that their associated homomorphisms $C_g$ are isolated in the spectrum $\M_{u,\infty}(B_{c_0},B_{c_0})$. In the previous subsection we have obtained some information about these functions $g$. Let us call them \textit{isolated functions}.

Note that any $\Phi\in \M_\infty(B_{c_0},B_{c_0})$ can be restricted to $\M_{u,\infty}(B_{c_0},B_{c_0})$ obtaining the composition homomorphism $C_{\xi(\Phi)}$. For every $\Phi, \Psi\in \M_\infty(B_{c_0},B_{c_0})$, it is clear  that 
$$
\sigma(\Phi, \Psi)\ge\sigma_u(C_{\xi(\Phi)}, C_{\xi(\Psi)}).
$$
Hence, if $C_{\xi(\Phi)}$ and $C_{\xi(\Psi)}$ do not share a Gleason part for the spectrum $\M_{u,\infty}(B_{c_0},B_{c_0})$ then the same holds for $\Phi$ and $\Psi$ within the spectrum $\mathcal{M}_\infty(B_{c_0},B_{c_0})$. In particular, if  $g\in\overline B_{\H^\infty(B_{c_0},\ell_\infty)}$ is an isolated function then, for every $\Phi\in\F(g)\subset\M_\infty(B_{c_0},B_{c_0})$, the Gleason part of $\Phi$ is contained in $\F(g)$. This is part of the answer of our first guiding question (Q1). We give a complete answer in the following proposition, which is inspired by \cite[Prop. 3.3]{AronDimantLassalleMaestre}.

\begin{proposition} \label{different-fibers}
Let $\Phi\in\F(g)\subset\M_\infty(B_{c_0},B_{c_0})$. The following are equivalent:
\begin{enumerate}
    \item[(i)] $g$ is not an isolated function.
    \item[(ii)] $\mathcal{GP}(\Phi)$ contains elements from different fibers.
\end{enumerate} 
\end{proposition}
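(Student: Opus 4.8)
The plan is to prove the two implications separately; $(ii)\Rightarrow(i)$ is routine, and I would argue it by contraposition. Assume $g$ is an isolated function and let $\Psi\in\mathcal{GP}(\Phi)$. From the inequality $\sigma(\Phi,\Psi)\ge\sigma_u(C_{\xi(\Phi)},C_{\xi(\Psi)})$ recorded just before the statement we get $\sigma_u(C_g,C_{\xi(\Psi)})<1$, so $C_{\xi(\Psi)}\in\mathcal{GP}(C_g)=\{C_g\}$; since $\xi$ is one-to-one on $\M_{u,\infty}(B_{c_0},B_{c_0})$ this forces $\xi(\Psi)=g$, i.e. $\Psi\in\F(g)$. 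Hence $\mathcal{GP}(\Phi)\subset\F(g)$ and (ii) fails.

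For $(i)\Rightarrow(ii)$, assume $g=(g_n)_n$ is not isolated. By Lemma \ref{coordenadas} there is an index $n_0$ such that $C_{g_{n_0}}$ is not isolated in $\M(\A(\D),\H^\infty(B_{c_0}))$. A coordinate that is a constant of modulus $1$ gives an isolated $C_{g_{n_0}}$, while a non-constant holomorphic coordinate has open range by the maximum modulus principle; hence $g_{n_0}(B_{c_0})\subset\D$. Pick $\widehat g_{n_0}\neq g_{n_0}$ with $r:=\sup_{x\in B_{c_0}}\rho\big(g_{n_0}(x),\widehat g_{n_0}(x)\big)<1$; since a pseudo-hyperbolic ball of radius $r<1$ around an interior point of $\D$ lies inside $\D$, also $\widehat g_{n_0}(B_{c_0})\subset\D$. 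Let $h\in\overline B_{\H^\infty(B_{c_0},\ell_\infty)}$ coincide with $g$ in every coordinate but the $n_0$-th, where $h_{n_0}=\widehat g_{n_0}$. Then $h\neq g$ and, by \eqref{C_g}, $\sigma_u(C_g,C_h)=r<1$.

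Next I would build $\Psi\in\F(h)\cap\mathcal{GP}(\Phi)$ by a single-coordinate surgery. For $c\in\D$ let $P_c^{(n_0)}\colon B_{c_0}\to B_{c_0}$ be the map that replaces the $n_0$-th coordinate by $c$ and leaves the others fixed (well-defined into $B_{c_0}$ precisely because $|c|<1$), and define
\[
\Psi(f)(x)=\Phi\big(f\circ P_{\widehat g_{n_0}(x)}^{(n_0)}\big)(x),\qquad f\in\H^\infty(B_{c_0}),\ x\in B_{c_0}.
\]
Exactly as in the proof of Lemma \ref{mapping R}, $[(x,y)\mapsto\Phi(f\circ P_{\widehat g_{n_0}(x)}^{(n_0)})(y)]$ is separately holomorphic, hence holomorphic by Hartogs' Theorem, so its restriction to the diagonal gives $\Psi(f)\in\H^\infty(B_{c_0})$ with $\|\Psi(f)\|\le\|f\|$, and $\Psi$ is a nonzero algebra homomorphism. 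Since $e_n^*\circ P_c^{(n_0)}=e_n^*$ for $n\neq n_0$ and $e_{n_0}^*\circ P_c^{(n_0)}\equiv c$, testing $\Psi$ on coordinate functionals gives $\xi(\Psi)=h$, so $\Psi\in\F(h)$. To estimate $\sigma(\Phi,\Psi)$, fix $x\in B_{c_0}$, set $\varphi=\delta_x\circ\Phi\in\pi^{-1}(g(x))$, and for $\|f\|\le1$ consider the holomorphic self-map of $\D$ given by $G_f(\zeta)=\varphi(f\circ P_\zeta^{(n_0)})$. The coordinate-$n_0$ form of the identity in Remark \ref{TaylorFalco} (valid in any coordinate, as noted after Lemma \ref{mapping R}) gives $G_f(g_{n_0}(x))=\varphi(f)$, while $G_f(\widehat g_{n_0}(x))=(\delta_x\circ\Psi)(f)$ by construction. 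Schwarz--Pick then yields $\rho\big(\varphi(f),(\delta_x\circ\Psi)(f)\big)\le\rho\big(g_{n_0}(x),\widehat g_{n_0}(x)\big)\le r$ for all such $f$; taking the supremum over those $f$ with $(\delta_x\circ\Psi)(f)=0$ gives $\rho(\delta_x\circ\Phi,\delta_x\circ\Psi)\le r$, hence $\sigma(\Phi,\Psi)\le r<1$. Thus $\Psi\in\mathcal{GP}(\Phi)$ lies in the fiber $\F(h)\neq\F(g)$, which proves (ii).

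The step I expect to be the main obstacle is this last estimate, where one must check at once that the surgery $\Psi$ really lands in $\F(h)$ and that it stays within Gleason distance $r$ of $\Phi$. The mechanism making this work is the observation --- the coordinate-$n_0$ form of Remark \ref{TaylorFalco} --- that replacing the $n_0$-th input variable of $\varphi$ by the constant $\varphi(e_{n_0}^*)$ does not change $\varphi$; this collapses the estimate to a single-variable Schwarz--Pick inequality that is uniform in $x$. Working with one coordinate (available thanks to Lemma \ref{coordenadas}) rather than several is what guarantees that $P_{\widehat g_{n_0}(x)}^{(n_0)}$ maps $B_{c_0}$ into $B_{c_0}$, so that no extension of $f$ to $B_{\ell_\infty}$ is required.
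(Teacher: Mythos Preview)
Your proof is correct and follows essentially the same route as the paper: both reduce to a single coordinate via Lemma~\ref{coordenadas}, observe that this coordinate maps into $\D$, and produce the new homomorphism by replacing that coordinate (the paper does this as $R_{h_1}\circ R_{g_1}^{-1}(\Phi)$ using the surjectivity in Lemma~\ref{mapping R}, which unfolds to exactly your surgery formula $\Phi(f\circ P^{(n_0)}_{\widehat g_{n_0}(x)})(x)$). The only cosmetic difference is in the distance estimate---the paper bounds $\|R_{g_1}(\Psi)-R_{h_1}(\Psi)\|$ directly by $\sup_{x}\|\delta_{g_1(x)}-\delta_{h_1(x)}\|=\|C_{g_1}-C_{h_1}\|<2$, whereas you route through the one-variable auxiliary map $G_f$ and Schwarz--Pick.
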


\begin{proof}
We just need to prove that $(i)$ implies $(ii)$ since the reciprocal was explained in the paragraph above.
If $g$ is not isolated, by Lemma \ref{coordenadas} there exists  $n$ such that $g_n$ is not isolated. We can assume (without loss of generality) that $g_1$ is not isolated, that is, there exist $h_1\in \overline B_{\H^\infty(B_{c_0})}$ such that $h_1\not = g_1$ and  $C_{g_1}$ and $C_{h_1}$ are in the same Gleason part  in $\M(\A(\D),\H^\infty(B_{c_0})) $. Observe that $g_1(B_{c_0})\subset \D$ (and the same holds for $h_1$) because if this is not the case,  $g_1$ would be a constant function $g_1(x)=\lambda\in\mathbb T$ meaning that $C_{g_1}$ should be isolated.
Denote by $\widehat g=(g_n)_{n\ge 2} \in \overline B_{\H^\infty(B_{c_0},\ell_\infty)}$ and consider, as in Lemma \ref{mapping R}, the mappings 
$$\begin{array}{ccc}
   R_{g_1}: \F(\widehat g)\to \F(g)  & \quad\textrm{and} \quad &  R_{h_1}: \F(\widehat g)\to \F(h_1, \widehat g)\\
    R_{g_1}(\Phi)(f)(x) = \Phi(f\circ \Lambda_{g_1(x)})(x) & \quad\quad \quad & R_{h_1}(\Phi)(f)(x) = \Phi(f\circ \Lambda_{h_1(x)})(x),
\end{array}$$
where $\Lambda_{g_1(x)}(y)=(g_1(x), y)$. As $R_{g_1}$ is surjective there exists $\Psi\in\F(\widehat g)$ such that $R_{g_1}(\Psi)=\Phi$. Let us see that $R_{h_1}(\Psi)$ is in the same Gleason part of $\Phi$ (but in a different fiber):
\begin{align*}
    \|R_{g_1}(\Psi)-R_{h_1}(\Psi)\| &= \sup_{f\in B_{\H^\infty(B_{c_0})}} \|R_{g_1}(\Psi)(f)-R_{h_1}(\Psi)(f)\|\\
    &=\sup_{f\in B_{\H^\infty(B_{c_0})}} \sup_{x\in B_{c_0}} \left| \Psi\left(f\circ\Lambda_{g_1(x)}-f\circ\Lambda_{h_1(x)}\right) (x) \right|\\
    &\le \sup_{f\in B_{\H^\infty(B_{c_0})}} \sup_{x\in B_{c_0}} \left\| f\circ\Lambda_{g_1(x)}-f\circ\Lambda_{h_1(x)} \right\|\\
    &=\sup_{x\in B_{c_0}}\|\delta_{g_1(x)}-\delta_{h_1(x)}\| = \|C_{g_1}-C_{h_1}\|<2.
\end{align*} 
This completes the proof.
\end{proof}

With respect to our second guiding question (Q2), the above proposition tells us that the only places to look for singleton Gleason parts are the fibers over isolated functions $g$. We would like to know whether all those fibers contain singleton Gleason parts. Unfortunately, we do not have a complete answer for this but we can contribute with some steps in that direction.  Let us recall the four examples of isolated functions (within the middle and the edge fibers) presented in the previous subsection.

\begin{itemize}
    \item Any biholomorphic function $g$ is isolated  by Example \ref{biholomorphic}. As it was commented there, $C_g^*$ maps strong boundary points into strong boundary points. Hence, by Proposition \ref{sbp-to-singleton}, the Gleason part of $C_g$ is singleton in $\M_\infty(B_{c_0}, B_{c_0})$.
    \item Example \ref{edge-constant} shows that each constant function $g(x)=\lambda$, with $\lambda\in\mathbb T^\infty$, is isolated. Since this is an \textit{edge} function, the composition homomorphism $C_g$ is not defined in $\M_\infty(B_{c_0}, B_{c_0})$. But we know in this case that there is a singleton Gleason part in the fiber over $g$. Indeed, for the scalar-valued spectrum it is proved in \cite[Prop. 3.6]{AronDimantLassalleMaestre} that in the fiber over any $\lambda\in\mathbb T^\infty$ there is a singleton Gleason part. Note that the scalar-valued spectrum $\M_\infty(B_{c_0})$ is naturally contained in $\M_\infty(B_{c_0}, B_{c_0})$ and each singleton $\varphi\in\M_\infty(B_{c_0})$ is isolated in $\M_\infty(B_{c_0}, B_{c_0})$. This implies that for a constant function $g(x)=\lambda$, with $\lambda\in\mathbb T^\infty$, the fiber $\F(g)\subset \M_\infty(B_{c_0}, B_{c_0})$ contains a singleton Gleason part.
    \item In Example \ref{not-biholomorphic} an isolated \textit{middle} function $g$ not biholomorphic is presented. By Proposition \ref{different-fibers}, the Gleason part of $C_g$ (in $\M_\infty(B_{c_0}, B_{c_0})$) is contained in $\F(g)$ but we do not know if it is singleton.
    \item The isolated function $g$ of Example \ref{edge-non-constant} is a non-constant \textit{edge} function. We could not answer whether it exists a singleton Gleason part within its fiber.
\end{itemize}

In  the scalar-valued case, the isolated elements of $\M_u(B_{c_0})$ are exactly all the evaluations $\delta_\lambda$ with $\lambda\in\mathbb T^\infty$. As we mentioned above, it is proved in \cite[Prop. 3.6]{AronDimantLassalleMaestre} for the spectrum $\M_\infty(B_{c_0})$ that in the fiber over any isolated element there is a singleton Gleason part. For the vector-valued spectrum we could not answer whether an analogous result holds. 
 Thus, we pose another open question.

\medskip

\textbf{Open question 2.} Let $g\in S_{\H^\infty(B_{c_0},\ell_\infty)}$ be an isolated function. Is there a homomorphism $\Phi\in\F(g)\subset \M_\infty(B_{c_0}, B_{c_0})$ such that the Gleason part of $\Phi$ is a singleton?

\medskip

By the previous explanation we can confine this question to isolated non constant edge functions or isolated non biholomorphic middle functions $g$. In the last case it would be interesting to know not only if there is a homomorphism in the fiber over $g$  with singleton Gleason part but also if $C_g$ itself has singleton Gleason part.

To finish, we want to add that even though we could not prove the existence of singleton Gleason parts in the fibers over the functions $g$ from Examples  \ref{not-biholomorphic} and \ref{edge-non-constant} we can construct other isolated functions which are ``middle non biholomorphic'' or ``edge non constant'' whose fibers do contain singleton parts.

\begin{example}
    Let $g_1\in S_{\H^\infty (B_{c_0})}$ be an isolated function for $\M(\A(\D),\H^\infty (B_{c_0}) )$ such that $g_1(B_{c_0})\subset \D$. For instance, $g_1(x)=x_1$ or $g_1(x)=\frac{a-x_1}{1-\overline a x_1}$ with $|a|<1$. Take a biholomorphic mapping $h:B_{c_0}\to B_{c_0}$. Now, from Lemma \ref{mapping R} we know that there is a surjective Gleason isometry $R_{g_1}:\F(h)\to\F(g_1, h) $. Since we comment above that $\mathcal{GP} (C_h)=\{C_h\}$  and we deduce from Lemma \ref{coordenadas} that $(g_1,h)$ is an
 isolated function,  it is clear due to Proposition \ref{different-fibers} that $R_{g_1}(C_h)=C_{(g_1,h)}$ has singleton Gleason part. Note that $(g_1,h)$ is a middle non biholomorphic function.
 
 If, instead, we consider $h\in S_{\H^\infty (B_{c_0}, \ell_\infty)}$ a constant function, $h(x)=\lambda$ with $\lambda\in\mathbb T^\infty$, we have seen above that there is a (scalar-valued) homomorphism $\Phi\in\F(h)$ with singleton Gleason part. Repeating the previous argument it is easily seen that $R_{g_1}(\Phi)\in\F(g_1,h)$ is isolated.  Note that in this case $(g_1,h)$ is a edge non constant isolated function.
\end{example}

\bibliographystyle{abbrv}

\end{document}